\documentclass[a4paper,leqno,11pt]{article}

\usepackage{
amsmath,
amssymb,
amsthm,
verbatim,
fancyhdr,
amsfonts,
subfigure,
mathrsfs
}

\textwidth 13cm

\usepackage[hidelinks]{hyperref}

\usepackage{tikz}
\usetikzlibrary{decorations.pathreplacing}

\newcommand\dx{{\,dx}}

\newcommand\G{\mathcal{G}}
\newcommand\K{\mathcal{K}}
\newcommand\B{\mathcal{B}}
\newcommand\R{\mathbb{R}}
\newcommand\C{\mathbb{C}}
\newcommand\N{\mathbb{N}}

\newcommand\vv{\mathrm{v}}

\newtheorem{theorem}{Theorem}[section]
\newtheorem{proposition}[theorem]{Proposition}
\newtheorem{lemma}[theorem]{Lemma}
\newtheorem{corollary}[theorem]{Corollary}

\theoremstyle{remark}
\newtheorem{remark}[theorem]{Remark}

\theoremstyle{definition}
\newtheorem{definition}[theorem]{Definition}
\newtheorem{example}[theorem]{Example}

\tikzstyle{infinito}=[circle,inner sep=0pt,minimum size=0mm]
\tikzstyle{nodo}=[circle,draw,fill,inner sep=0pt,minimum size=0.5*width("k")]

\date{}

\title{On the lack of bound states for certain \\ NLS equations  on metric graphs}


\author{Enrico Serra$^\dagger$, Lorenzo Tentarelli$^\ddagger$
\\ \ \\
{\small  $^\dagger$Dipartimento di Scienze Matematiche ``G.L. Lagrange'' } \\
{\small Politecnico di Torino } \\
{\small Corso Duca degli Abruzzi, 24, 10129 Torino, Italy} \\
{\small \texttt{enrico.serra@polito.it}}\\ \ \\
{\small $^\ddagger$Dipartimento di Matematica e Applicazioni ``R. Caccioppoli'' } \\
{\small Universit\`a degli Studi di Napoli ``Federico II'' } \\
{\small Via Cintia, Monte S. Angelo, I-80126 Napoli, Italy} \\
{\small \texttt{lorenzo.tentarelli@unina.it}}
}

\begin{document}

\maketitle

\begin{abstract}
The purpose of this paper is to prove some results on the absence of bound states for certain nonlinear Schr\"odinger equations on noncompact metric graphs with localized nonlinearity. In particular, we show how the topological and metric properties of graphs affect the existence/nonexistence of bound states. This work completes the discussion initiated in \cite{ST,T}.
\end{abstract}

\noindent{\small AMS Subject Classification: 35R02, 35Q55, 81Q35, 35Q40}
\smallskip

\noindent{\small Keywords: metric graphs, NLS, localized nonlinearity, bound states}

\section{Introduction}

The study of NLS equations on noncompact metric graphs has gained popularity in the last few years, because (among other things) these equations are expected to describe  the dynamics of \emph{Bose--Einstein condensates} in ramified traps (see \cite{gnusmi,kottos}). In particular, many studies concentrate on a specific NLSE, the cubic focusing \emph{Gross--Pitaevskii} equation,
\begin{equation}
 \label{GP}
 i\psi_t=-\psi_{xx}-|\psi|^2\psi
\end{equation}
on a graph $\G$, with homogeneous {\em Kirchhoff conditions} at the vertices (see \eqref{kirchhoff}). A central role in this line of research is played by \emph{stationary solutions} of prescribed mass (i.e., $L^2$ norm) $\mu>0$, namely functions of the form
\begin{equation}
 \label{stat}
 \psi(t,x)=e^{i\lambda t}u(x),\quad u:\G\to\C,\quad\lambda\in\R,
\end{equation}
which solve \eqref{GP} when the function $u$ solves the stationary NLS equation 
\[u'' +|u|^2u = \lambda u
\]
(see e.g. \cite{acfn3,ast0,ast1,ast2}). The functions $u$ with these properties are called {\em bound states} of mass $\mu$.

The papers \cite{der, noja} present several interesting motivations for investigating a variant of this problem, characterized by the fact that the nonlinearity affects only a {\em compact} part of the graph. One speaks therefore of problems with {\em localized} nonlinearity. In this case, and for a generic power nonlinearity, the bound states $u$ appearing in \eqref{stat} satisfy the same mass constraint
\begin{equation}
\label{pres}
\int_\G |u|^2\dx = \mu
\end{equation}
but solve (for some $\lambda \in \R$) the stationary NLS equation
\begin{equation}
\label{eq}
u'' + \kappa(x)|u|^{p-2}u = \lambda u
\end{equation}
on each edge of $\G$, still with Kirchhoff boundary conditions. The coefficient $\kappa$ is the characteristic function of the 
part of $\G$ where the nonlinearity is located. Bound states satisfy therefore a  {\em double regime}:  nonlinear in a compact part of $\G$ and linear elsewhere. The exponent $p$ is always assumed to be greater than 2; when $p\in (2,6)$,  the problem is called $L^2$--subcritical (see \cite{cazenave}).

In this work we confine ourselves to Kirchhoff boundary conditions. Many other choices (both in the localized and in the non--localized case) are possible, such as, for instance, the case of $\delta$--like conditions at the vertices. Recent results on this topic are presented in \cite{acfn,acfn2,acfn4}.
\medskip

In this paper we are mainly concerned with problems with localized nonlinearity. Specifically,
we consider a noncompact metric graph $\G$ and we assume that the nonlinearity is localized in the compact core $\K$ of $\G$, namely the subgraph of $\G$ consisting of its bounded edges (see Section \ref{set} for precise statements).

Thus a bound state of mass $\mu$ for the NLS equation on $\G$ with nonlinearity localized on $\K$ is a function $u$ that satisfies the mass constraint \eqref{pres} and solves
equation \eqref{eq}
on each edge of $\G$, with Kirchhoff boundary conditions  at each vertex of $\K$.

Defining $H_\mu^1(\G) = \{ u \in H^1(\G) \; :\; \|u\|_{L^2(\G)} = \mu\}$, it is immediate to recognize (see Section \ref{set}) that bound states correspond to critical points on $H_\mu^1(\G)$ of the energy functional
\[
E(u) = \frac12\int_\G |u'|^2\dx - \frac1p\int_\K |u|^p\dx.
\]
If $u$ happens to be not only a critical point of the functional $E$ but an absolute minimizer, it is called a {\em ground} state.

Existence (and multiplicity) of ground and bound states for the NLS equation with localized nonlinearity has been studied in the papers \cite{ST} and \cite{T}, in dependence of the parameters $\mu$ and $p$. We summarize in the next theorem the main results obtained so far in order to explain our motivations.

\begin{theorem} [\cite{ST}, \cite{T}] 
\label{state}
Let $\G$ be a noncompact metric graph with nonempty compact core.
\begin{itemize}
\item[1.] If $p\in (2,4)$, for every $\mu>0$ there exists a ground state of mass $\mu$.
\item[2.] If $p\in (2,6)$, for every $\mu$ large there exist many bound states of mass $\mu$.
\item[3.] If $p\in [4,6)$, for every $\mu$ large there exists a ground state of mass $\mu$.
\item[4.] If $p\in [4,6)$, for every $\mu$ small there exist no ground states of mass $\mu$.
\end{itemize}

\end{theorem}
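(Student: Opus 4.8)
The plan is to prove nonexistence of ground states by showing that, for $\mu$ small, the energy $E$ is \emph{strictly positive} on the whole constraint $H_\mu^1(\G)$, while its infimum nevertheless equals $0$; since the level $0$ is approached only by functions whose mass escapes to infinity along a half-line, no minimizer can exist.

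First I would record the elementary pointwise bound available on any noncompact graph: for every $u\in H^1(\G)$,
\[
\|u\|_{L^\infty(\G)}^2\le 2\|u\|_{L^2(\G)}\|u'\|_{L^2(\G)},
\]
obtained by integrating $(|u|^2)'$ from the point where $|u|$ is maximal along a path running to infinity on some half-line, where $u$ vanishes. Writing $T=\int_\G|u'|^2\dx$ and using $\int_\G|u|^2\dx=\mu$, this reads $\|u\|_{L^\infty(\K)}^2\le 2\sqrt{\mu}\,\sqrt{T}$. From here I would derive two complementary estimates of the nonlinear term. Exploiting the compactness of $\K$ (of total length $|\K|$) through $\int_\K|u|^p\dx\le|\K|\,\|u\|_{L^\infty(\K)}^p$ gives
\[
\int_\K|u|^p\dx\le 2^{p/2}|\K|\,\mu^{p/4}T^{p/4},
\]
whereas $\int_\K|u|^p\dx\le\|u\|_{L^\infty(\K)}^{p-2}\int_\K|u|^2\dx\le\|u\|_{L^\infty(\K)}^{p-2}\mu$ gives the scale--invariant Gagliardo--Nirenberg--type bound
\[
\int_\K|u|^p\dx\le 2^{(p-2)/2}\,\mu^{(p+2)/4}T^{(p-2)/4}.
\]

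For $p=4$ the first estimate alone suffices: it yields $E(u)\ge(\tfrac12-|\K|\mu)T\ge0$ as soon as $\mu\le 1/(2|\K|)$. For $p\in(4,6)$ the exponent $p/4$ exceeds $1$, so the first bound keeps $E$ nonnegative only for small kinetic energy ($T\le T_*(\mu)$), while the second bound, with exponent $(p-2)/4<1$, keeps $E$ nonnegative only for large kinetic energy ($T\ge T^*(\mu)$). The key point is that $T_*(\mu)\to\infty$ and $T^*(\mu)\to0$ as $\mu\to0$, so for $\mu$ small the two ranges overlap and together force $E(u)\ge0$ for every admissible $u$, with equality only if $T=0$. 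That case is excluded, since $T=0$ makes $u$ constant on each component of the noncompact graph $\G$, hence $u\equiv0$ by $L^2$--integrability, contradicting $\|u\|_{L^2}^2=\mu>0$. Thus $E(u)>0$ throughout $H_\mu^1(\G)$.

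Finally I would verify that the infimum is still $0$: placing a tent of mass $\mu$ on a far interval of a half-line, disjoint from $\K$, produces functions with $\int_\K|u|^p\dx=0$ and $T\to0$, so $E\to0^+$. Hence $\inf_{H_\mu^1(\G)}E=0$ is not attained, and no ground state of mass $\mu$ exists. I expect the main obstacle to be precisely the range $p\in(4,6)$, where $E$ is not manifestly nonnegative: the argument there hinges on combining the compactness estimate with the Gagliardo--Nirenberg estimate and on checking that the smallness of $\mu$ makes their two ranges of validity cover all of $[0,\infty)$.
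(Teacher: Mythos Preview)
Your proposal addresses only part~4, which is reasonable since parts~1--3 are existence results from the cited works requiring separate variational arguments. The paper does not itself prove Theorem~\ref{state} (it is quoted as a summary of \cite{ST,T}), but Section~\ref{pos} contains a Corollary that reproves part~4 with an explicit threshold, and your argument coincides with that one: both show $E(u)>0$ on $H_\mu^1(\G)$ for small $\mu$ by combining the length--based bound $\int_\K|u|^p\le\ell\,\|u\|_{L^\infty}^p$ (your first estimate, corresponding to the paper's use of \eqref{GNI}) with a Gagliardo--Nirenberg bound (your second estimate is exactly \eqref{GN} derived via the $L^\infty$ inequality), and both conclude from $\inf E\le 0$ that the infimum is not attained. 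The only cosmetic difference is that the paper argues by contradiction from $E(u)\le 0$, whereas you bound $E$ from below directly by splitting the range of the kinetic term.
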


The unexpected presence of the threshold $p=4$, discovered in \cite{T}, is a peculiar feature of problems with localized nonlinearity. No analogue
has been found so far for the NLS equation with nonlinearity on the whole of $\G$.

\begin{remark} 
\label{scale} We have stated the preceding result in a slightly simplified form (``$\mu$ small/large'') for the sake of clarity. Actually, as observed in \cite{ast2} and \cite{T}, the problem on $\G$ with mass $\mu$ is equivalent, for every $\theta>0$, to the problem on the homothetic graph $\theta^{\frac{2-p}{6-p}}\G$ with mass $\theta\mu$ via the scaling $u(x) \mapsto \theta^{\frac2{6-p}}u(\theta^{\frac{p-2}{6-p}}x)$. From this it follows that if $\ell$ denotes the total length of the compact core $\K$, the quantity $\ell\mu^{\frac{p-2}{6-p}}$ is {\em scale invariant}. Therefore a precise statement of Theorem \ref{state}, and of all our result below, will involve the quantity $\ell\mu^{\frac{p-2}{6-p}}$ instead of $\mu$.
\end{remark}

It is clear from Theorem \ref{state} that, as far as the solvability of equation \eqref{eq} is concerned, there exists a region of the parameters $p$ and $\mu$ where existence of solutions is not assured. Precisely, when $p \in [4,6)$ and $\mu$ (or more correctly $\ell\mu^{\frac{p-2}{6-p}}$)  is small, Theorem \ref{state} says that no solution can appear as a ground state, but leaves open the possibility to solve the problem through the existence of bound states.

The main purpose of this paper is precisely to analyze what happens in the region of parameters where Theorem \ref{state} does not apply. It turns out that in this case the situation is much more involved, and that the solvability of equation \eqref{eq} depends on the topological properties of the graph $\G$, in sharp contrast with the general results provided by Theorem \ref{state}, that do not depend on the graph at all.

Before we outline our main results, we need to spend a few words on the type of bound states $u$ that a graph may support. These are essentially of two kinds. Indeed, either $u$ vanishes identically on all the half-lines, or $u\not\equiv 0$ on at least one half-line. 
In the former case we speak of solutions supported on $\K$, in the latter of solutions supported on $\G$ (notice that a solution supported on $\G$ may vanish identically on {\em some} half-lines, but not on all of them). We will deal with solutions supported on $\K$ in Section \ref{cyc_pend}. This class of solutions is less interesting: if $u\equiv 0$ on all half-lines, it can be considered as a solution of an NLS equation
with {\em any} nonlinearity $f(u)$ (with $f(0)=0$) outside the compact core and, in a sense, the problem loses its identity.
On the contrary, bound states supported on $\G$ are much more interesting because they live on a noncompact domain  and  are really subject to the double regime imposed by the localized nonlinearity, linear on the half-lines and nonlinear in the compact core.

We are now in a position to describe our main results. We will first deal with a generic graph $\G$ and  prove (Theorem \ref{non_lapos}) that for every $p\in [4,6)$ there exists an (explicit) constant $C^*$ such that if $\ell\mu^{\frac{p-2}{6-p}} < C^*$, there are no bound states of mass $\mu$ with $\lambda\ge 0$. From this it follows that under the same conditions there are no bound states supported on $\G$.

Next we identify a particular class of graphs (trees with at most one pendant, see Definitions \ref{pendant}, \ref{tree}) where the preceding result can be much improved. Indeed for this class of graphs we first show (Theorem \ref{nonex}) that for every $p >2$ and every $\mu>0$, there are no bound states with mass $\mu$ and $\lambda \le 0$. Combining the two results we deduce that whenever $\G$ is in this class and $p\in [4,6)$, the condition $\ell\mu^{\frac{p-2}{6-p}} < C^*$ rules out the existence of {\em any} bound state of mass $\mu$.
On this class of graphs the question arising from Theorem \ref{state} has therefore a complete answer: when $p \in [4,6)$ for ``$\mu$ small'', equation \eqref{eq} has no solutions at all.

These results are complemented in the following way. First we show that whenever the graph is not a tree with one pendant, it is possible 
to construct bound states supported on $\K$ (at least for a dense set of the parameters involved). This shows that Theorem \ref{nonex} cannot be extended to more general classes of graphs. Finally we compare the case of localized nonlinearity to the more common case of the ``everywhere nonlinear'' NLS equation. It turns out that some of the phenomena described in this paper are a specific feature of problems with localized nonlinearity. We exhibit indeed a graph $\G$ that admits no ground state for both problems, that has no bound state supported on $\G$ for small $\mu$ when the nonlinearity is localized on the compact core, but {\em does have} bound states supported on $\G$ for {\em every} mass $\mu$ when the nonlinearity affects the whole graph.
\medskip

The paper is structured as follows. In Section \ref{set} we introduce the precise setting and definitions required to describe the problem. 
The main nonexistence results are in Section \ref{pos}, while in Section \ref{cyc_pend} we show that the nonexistence result on trees cannot be extended to other graphs. Finally, Section \ref{full_non} is devoted to the comparison between problems with localized nonlinearity and everywhere nonlinear equations.

\section{Setting and definitions}
\label{set}

We start by recalling some basic definitions on metric graphs (for more details we refer the reader to \cite{ast1, berkolaiko, Kuchment} and references therein).

In this paper a \emph{metric graph} $\G$ is actually a connected {\em multigraph}, where multiple edges  and self--loops are allowed. Each edge is a finite or half-infinite segment of line and the edges are joined at their endpoints (the vertices of $\G$) according to the topology of the graph (see Figure \ref{figuno}). 

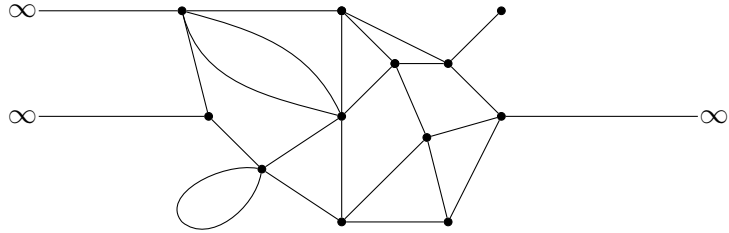
\begin{figure}[ht]
\begin{center}
\begin{tikzpicture}[xscale= 0.7,yscale=0.7]
\node at (-.5,2) [nodo] (02) {};
\node at (2,2) [nodo] (22) {};
\node at (2,4) [nodo] (24) {};
\node at (3.6,1.6) [nodo] (42) {};
\node at (3,3) [nodo] (33) {};
\node at (5,2) [nodo] (52) {};
\node at (3,3) [nodo] (32) {};
\node at (4,3) [nodo] (43) {};
\node at (-1,4) [nodo] (04) {};
\node at (2,4) [nodo] (24) {};
\node at (.5,1) [nodo] (11) {};
\node at (2,0) [nodo] (20) {};
\node at (4,0) [nodo] (40) {};
\node at (5,4) [nodo] (54) {};
\node at (-4,2) [infinito] (meno) {$\infty$};
\node at (-4,4) [infinito] (menoalt) {$\infty$};
\node at (9,2) [infinito] (piu) {$\infty$};

\draw[-] (02)--(04);
\draw[-] (04)--(24);
\draw[-] (04) to [out=-15,in=115] (22);
\draw[-] (04) to [out=-75,in=165] (22);
\draw[-] (24)--(22);
\draw[-] (02)--(11);
\draw[-] (11)--(22);
\draw[-] (11)--(20);
\draw[-] (11) to [out=170,in=130] (-1,0);
\draw[-] (-1,0) to [out=-40,in=260] (11);
\draw[-] (20)--(22);
\draw[-] (22)--(33);
\draw[-] (24)--(33);
\draw[-] (24)--(43);
\draw[-] (33)--(43);
\draw[-] (43)--(52);
\draw[-] (33)--(42);
\draw[-] (20)--(42);
\draw[-] (20)--(40);
\draw[-] (40)--(42);
\draw[-] (42)--(52);
\draw[-] (40)--(52);
\draw[-] (43)--(54);
\draw[-] (02)--(meno);
\draw[-] (52)--(piu);
\draw[-] (04)--(menoalt);
\end{tikzpicture}
\end{center}
\caption{\footnotesize{{a metric graph with 3 half-lines and 22 bounded edges (one pendant).}
 }}
\label{figuno}
\end{figure}

Unbounded edges
are identified with (copies of) $\R^+ = [0,+\infty)$ and are called half-lines, while
bounded edges $e$ are identified with closed bounded intervals $I_e =[0,\ell_e]$, $\ell_e>0$.
In each case a coordinate  $x_e$ is chosen in the corresponding interval, with arbitrary orientation if the interval is bounded, 
and with the natural orientation in case of a half-line.

The graph $\G$ turns in this way into a locally compact metric space, the metric given by the shortest distance along the edges.
Clearly a metric graph is \emph{compact} if and only if it does not contain any half-line. An important role in this paper is played by the following notion, introduced in \cite{ast2,ST}.

\begin{definition}
If $\G$ is a metric graph, we define its \emph{compact core} $\K$ as the metric subgraph of $\G$ consisting of all its bounded edges.
\end{definition}

In what follows, with some abuse of notation, we will say that edges or vertices {\em belong} to the compact core when their points belong to $\K$ as a metric space.

We also denote by $\ell$ the measure of the compact core $\K$, namely
\[
 \ell=\sum_{e\in\K}\ell_e.
\]

\begin{definition}
 \label{pendant}
 We call  \emph{pendant} an edge $e\in\K$ which is incident at a vertex of degree one.
\end{definition}

An example of (noncompact) graph with a pendant is given again by Figure \ref{figuno}. We notice that, by definition, a half-line can never be a pendant.

Finally we recall the following notion.

\begin{definition}
\label{tree}
A \emph{tree} is a graph that contains no \emph{cycles}.
\end{definition}

Trees will play an important role in the next sections.
An example of a noncompact tree is given in Figure \ref{figtre}.

\begin{figure}[ht]
\begin{center}
\begin{tikzpicture}[xscale= 0.5,yscale=0.5]
\node at (0,0) [nodo] (00) {};
\node at (0,2) [nodo] (02) {};
\node at (0,-1) [nodo] (0-1) {};
\node at (3,-1) [nodo] (3-1) {};
\node at (-1,-1) [nodo] (-1-1) {};
\node at (-3,-2) [nodo] (-3-2) {};
\node at (-3,0) [nodo] (-30) {};
\node at (3,3) [nodo] (33) {};
\node at (4,3) [nodo] (43) {};
\node at (3.5,5) [nodo] (355) {};
\node at (2,-2) [nodo] (2-2) {};
\node at (-2,3) [nodo] (-23) {};
\node at (-4,5) [infinito] (-45) {$\infty$};
\node at (8,1) [infinito] (81) {$\infty$};
\node at (7,-5) [infinito] (7-5) {$\infty$};
\node at (2,-6) [infinito] (2-6) {$\infty$};
\node at (-2,-5) [infinito] (-2-5) {$\infty$};
\node at (-8,-2) [infinito] (-8-2) {$\infty$};
\node at (-8,0) [infinito] (-80) {$\infty$};
\draw[-] (00)--(02);
\draw[-] (00)--(0-1);
\draw[-] (00)--(3-1);
\draw[-] (00)--(-1-1);
\draw[-] (-3-2)--(-1-1);
\draw[-] (-30)--(-1-1);
\draw[-] (33)--(02);
\draw[-] (33)--(43);
\draw[-] (33)--(355);
\draw[-] (2-2)--(3-1);
\draw[-] (-23)--(02);
\draw[-] (-45)--(355);
\draw[-] (81)--(43);
\draw[-] (7-5)--(3-1);
\draw[-] (2-2)--(2-6);
\draw[-] (-2-5)--(0-1);
\draw[-] (-3-2)--(-8-2);
\draw[-] (-30)--(-80);
\end{tikzpicture}
\end{center}
\caption{\footnotesize{{a noncompact tree (with one pendant).}
 }}
\label{figtre}
\end{figure}
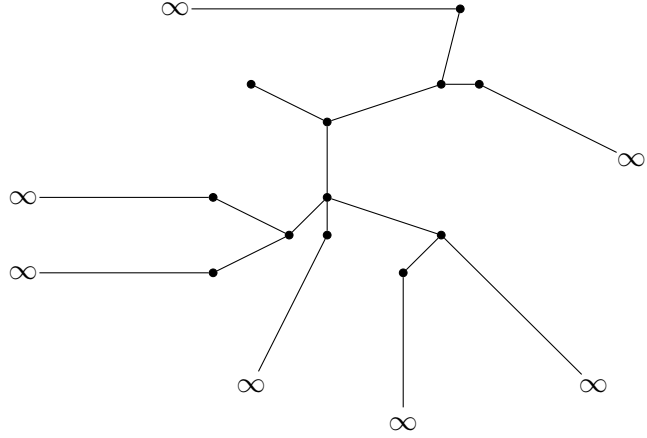

\medskip

A function $u:\G\to\C$ can be regarded as a family of functions $\{u_e\}$, where $u_e:I_e\to\C$  is the restriction of $u$ to the edge (represented by) $I_e$. The usual $L^p$ spaces can be defined over $\G$ in the natural way, with norm
\[
\|u\|_{L^p(\G)}^p = \sum_{e} \| u_e\|_{L^p(I_e)}^p,
\]
while $H^1(\G)$ is the space of continuous $u:\G\to \C$  such that $u_e\in  H^1(I_e;\C)$ for every edge $e$, with norm
\[
 \|u\|_{H^1(\G)}^2 = \|u'\|_{L^2(\G)}^2 + \|u\|_{L^2(\G)}^2.
\]
Continuity at a vertex $\mathrm{v}$ means that different components $u_e$
meeting at $\mathrm{v}$ agree. Further details can be found in \cite {ast1}.

For  $\mu >0$ we set
\[
H_{\mu}^1(\G)= \{u\in H^1(\G)  : \|u\|_{L^2(\G)}^2 = \mu\},
\]
and we define bound states of prescribed mass as follows (see \cite{ST}).

\begin{definition}
\label{defbs}
Let $\G$ be a metric graph with nonempty compact core $\K$ and let $\kappa:\G\to \{0,1\}$ be the characteristic function of $\K$. Finally, let $p > 2$.
We say that a function $u:\G\to\C$ is a \emph{bound state of mass $\mu$ for the NLS equation on $\G$ with nonlinearity localized on $\K$  and Kirchhoff conditions} if:
\begin{equation}
 \label{mass}
 u\in H_{\mu}^1(\G),
\end{equation}
there exists $\lambda\in\R$ such that for every edge $e$
\begin{equation}
\label{NLS}
 u_e'' + \kappa(x)|u_e|^{p-2}\,u_e = \lambda u_e,
\end{equation}
and for every vertex $\vv\in \K$
\begin{equation}
 \label{kirchhoff}
 \sum_{e\succ\mathrm{v}}\frac{du_e}{dx_e}(\mathrm{v}) = 0.
\end{equation}
The symbol $e\succ\mathrm{v}$ means that the sum is extended to all edges $e$ incident at ${\mathrm v}$, while  $\frac{du_e}{dx_e}(\mathrm{v})$ stands for $u_e'(0)$ or $-u_e'(\ell_e)$, according to whether $x_e$ is equal to $0$ or $\ell_e$ at $\mathrm{v}$.
\end{definition}

The final requirement in the preceding definition is called the \emph{Kirchhoff condition}. For simplicity, we refer to a function $u$ satisfying Definition \ref{defbs} as a \emph{bound state of mass} $\mu$.

It is straightforward to check that \eqref{mass}--\eqref{kirchhoff} are equivalent to their weak formulation: $u\in H_\mu^1(\G)$ and
for every $v \in H^1(\G)$
\begin{equation}
 \label{weakbound}
 \mathrm{Re}\left(\int_\G u'\overline{v}'\dx\right) - \mathrm{Re}\left(\int_\K |u|^{p-2}\,u\overline{v} \dx\right) +  \lambda \, \mathrm{Re}\left(\int_\G u\overline{v}\dx\right)=0,
\end{equation}
with $\overline{v}$ denoting the conjugate of $v$ (see \cite{ST}). 

It is thus clear that $u$ is a bound state of mass $\mu$ if and only if it is a critical point of the energy functional 
\begin{equation}
\label{functe}
E(u) = \frac12\|u'\|_{L^2(\G)}^2 - \frac1p\|u\|_{L^p(\K)}^p = \frac12\int_\G |u'|^2\dx - \frac1p\int_\K |u|^p\dx
\end{equation}
over $H_\mu^1(\G)$. The constant $\lambda$ appearing in \eqref{NLS} and \eqref{weakbound} is a {\em Lagrange multiplier}, arising because the funcional $E$ is constrained on the manifold $H_\mu^1(\G)$ (the sphere of $L^2$ of radius $\sqrt\mu$ in $H^1(\G)$).

In particular, {\em ground} states of mass $\mu$ are the absolute minimizers of $E$. 
Up to the multiplication by a constant phase, they are real--valued and of constant sign.

We note, for future reference, that setting $v=u$ in \eqref{weakbound} one finds that
\begin{equation}
\label{lam}
\lambda = \lambda(u) = \frac1\mu \left(\int_\K |u|^p\dx - \int_\G |u'|^2\dx\right).
\end{equation}

The following definition will help in stating our main results. 

\begin{definition}
We say that a bound state $u$ is {\em supported on} $\K$ if $u$ vanishes identically on all the half-lines of $\G$. Otherwise (i.e. if $u$
does not vanish on at least one half-line), we say that it is {\em supported on} $\G$.
\end{definition}

Finally, we mention the graph version of two cases of the Gagliardo--Nirenberg inequality. The proof coincides almost completely with that for  real--valued function that can be found in \cite{T,ast2}. We sketch it briefly for the sake of completeness.

\begin{proposition}
For every $p\in[2,\infty]$ there exists a constant $\mathcal{C}_p>0$ such that
\begin{equation}
\label{GN}
\|u\|_{L^p(\G)}^p \leq \mathcal{C}_p\|u\|_{L^2(\G)}^{\frac{p}{2}+1}\|u'\|_{L^2(\G)}^{\frac{p}{2}-1}\qquad\forall u\in H^1(\G)\qquad\hbox{ if $\; p<\infty$}
\end{equation}
and
\begin{equation}
\label{GNI}
\|u\|_{L^{\infty}(\G)} \leq \mathcal{C}_{\infty}\|u\|_{L^2(\G)}^{1/2}\|u'\|_{L^2(\G)}^{1/2}\qquad\forall u\in H^1(\G) \qquad\hbox{ if $\; p=\infty$}
\end{equation}
for every noncompact metric graph $\G$.
\end{proposition}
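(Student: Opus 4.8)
The plan is to prove the endpoint estimate \eqref{GNI} first and then to obtain \eqref{GN} for finite $p$ by an elementary interpolation, exactly as in the real--valued case; the only point requiring care is the passage to complex--valued functions.

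For \eqref{GNI} I would argue pointwise. Fix any point $x_0\in\G$. Since $\G$ is noncompact and connected, it contains a half-line, and there is a path $\gamma$ in $\G$, parametrized by arc length $s\in[0,\infty)$, starting at $x_0$ and running to infinity along such a half-line. Because $u\in H^1(\G)$, its restriction to a half-line vanishes at infinity, so $|u(\gamma(s))|\to 0$ as $s\to\infty$. The difficulty in the complex case is that $|u|$ need not be differentiable where $u=0$, so instead of differentiating $|u|$ along $\gamma$ I work with the smooth quantity $f(s)=|u(\gamma(s))|^2$, which satisfies $f'(s)=2\,\mathrm{Re}\bigl(\overline{u}\,u'\bigr)$ (the derivatives being taken along $\gamma$) and hence $|f'(s)|\le 2|u|\,|u'|$. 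Then
\[
|u(x_0)|^2=f(0)=-\int_0^\infty f'(s)\,ds\le 2\int_\gamma |u|\,|u'|\,ds\le 2\,\|u\|_{L^2(\G)}\|u'\|_{L^2(\G)},
\]
where the last inequality uses Cauchy--Schwarz together with the fact that $\gamma$ is a subgraph of $\G$. Taking the supremum over $x_0\in\G$ gives \eqref{GNI} with $\mathcal{C}_\infty=\sqrt2$.

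To deduce \eqref{GN} for $p\in[2,\infty)$ I would interpolate between $L^2$ and $L^\infty$: since $|u|^p=|u|^{p-2}|u|^2$, one has $\|u\|_{L^p(\G)}^p\le\|u\|_{L^\infty(\G)}^{p-2}\|u\|_{L^2(\G)}^2$, and inserting the bound just obtained for $\|u\|_{L^\infty(\G)}$ yields
\[
\|u\|_{L^p(\G)}^p\le 2^{\frac{p-2}{2}}\,\|u\|_{L^2(\G)}^{\frac{p}{2}+1}\|u'\|_{L^2(\G)}^{\frac{p}{2}-1},
\]
since $\tfrac{p-2}{2}+2=\tfrac p2+1$ and $\tfrac{p-2}{2}=\tfrac p2-1$; this is \eqref{GN} with $\mathcal{C}_p=2^{(p-2)/2}$. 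The only genuine obstacle is the one already isolated above, namely the non-smoothness of $|u|$ at its zeros, which is circumvented by estimating the derivative of $|u|^2$ rather than that of $|u|$; apart from this the argument is identical to the proof for real--valued functions in \cite{T,ast2}.
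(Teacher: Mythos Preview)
Your argument is correct, but it follows a genuinely different route from the paper's proof. The paper proceeds by \emph{decreasing rearrangement}: for a nonnegative real $u$ it passes to $u^*\in H^1(\R^+)$ (which preserves all $L^r$ norms and the supremum, and decreases $\|u'\|_{L^2}$), then invokes the classical one--dimensional Gagliardo--Nirenberg inequality on $\R^+$; the complex case is handled via the pointwise bound $||u|'|\le|u'|$ a.e., applied to $|u|$ before rearranging. By contrast, you prove the $L^\infty$ endpoint directly from a path--to--infinity argument on $\G$ and then interpolate to get finite $p$. Your approach is more self--contained (no rearrangement machinery, no external citation for the half--line inequality) and yields the same endpoint constant $\mathcal{C}_\infty=\sqrt{2}$ that the paper uses later. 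The paper's approach, on the other hand, imports the sharp half--line constants $\mathcal{C}_p$ for every finite $p$, whereas your interpolation gives $\mathcal{C}_p=2^{(p-2)/2}$, which is generally larger; since the statement only asserts existence of some constant, this is immaterial here. One small point worth making explicit in your write--up is that the path $\gamma$ can be chosen \emph{injective} (take a simple path from $x_0$ to the base vertex of a half--line and concatenate), so that the step $\int_\gamma|u|\,|u'|\,ds\le\|u\|_{L^2(\G)}\|u'\|_{L^2(\G)}$ is justified.
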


\begin{proof}
 Following \cite{ast1}, given a real--valued nonnegative $u\in H^1(\G)$ we can define its \emph{decreasing rearrangement} as the function $u^*:\R^+\to\R$ such that
 \[
  u^*(x)=\inf\{t\geq0:\rho(t)\leq x\},
 \]
 with
 \[
  \rho(t)=\sum_{e}\mathrm{meas}\{x_e\in I_e:u_e(x_e)\geq t\},\qquad t\geq0.
 \]
 One can prove (see again \cite{ast1}) that $u^*\in H^1(\R^+)$, $\sup_{\G}u=\sup_{\R^+}u^*$,
 \[
  \int_{\G}|u|^r\dx=\int_{\R^+}|u^*|^r\dx\qquad\text{and}\qquad\int_{\R^+}|(u^*)'|^2\leq\int_{\G}|u'|^2\dx.
 \]
  Now, let $u\in H^1(\G)$ be a generic complex--valued function. Since
 \[
  ||u|'(x)|\leq|u'(x)| \qquad \hbox{for a.e. $x$ in }\;\G,
 \]
from the classical Gagliardo--Nirenberg inequality in $\R^+$ (\cite{dolbeault}) we obtain
 \begin{align*}
  \|u\|_{L^p(\G)}^p & = \||u|^*\|_{L^p(\R^+)}^p\leq\mathcal{C}_p\||u|^*\|_{L^2(\R^+)}^{\frac{p}{2}+1}\|(|u|^*)'\|_{L^2(\R^+)}^{\frac{p}{2}-1}\\[.2cm]
                    & \leq \mathcal{C}_p\|u\|_{L^2(\G)}^{\frac{p}{2}+1}\||u|'\|_{L^2(\G)}^{\frac{p}{2}-1}\leq \mathcal{C}_p\|u\|_{L^2(\G)}^{\frac{p}{2}+1}\|u'\|_{L^2(\G)}^{\frac{p}{2}-1}.
 \end{align*}
 In the very same way one can prove \eqref{GNI}.
\end{proof}

\section{Nonexistence results}
\label{pos}

In this section we prove the main nonexistence results. In the sequel we tacitly assume that $\G$ is a noncompact metric graph with a nonempty compact core $\K$, where the nonlinearity is located, and that $\mu>0$. We also recall that $\ell$ denotes the mesaure of the compact core. 

First we show that when $p\in [4,6)$ the nonnegativity of the Lagrange multiplier $\lambda$ implies a double estimate on the kinetic part of any bound state in terms of its mass.

\begin{lemma}
\label{twoest}
Let $p\in[4,6)$. Assume that $u$ is a bound state of mass $\mu$ with $\lambda\geq0$. Then
\begin{equation}
\label{alt}
\int_{\G}|u'|^2\dx\leq\mathcal{C}_p^{\frac{4}{6-p}}\mu^{\frac{p+2}{6-p}}
\end{equation}
and
\begin{equation}
\label{bas}
\left(\int_{\G}|u'|^2\dx\right)^{\frac{p-4}{4}}\geq\mathcal{C}_{\infty}^{-p}\ell^{-1}\mu^{-\frac{p}{4}}
\end{equation}
 where $\mathcal{C}_p$ and $\mathcal{C}_{\infty}$ are the constants appearing in inequalities \eqref{GN} and \eqref{GNI}.
\end{lemma}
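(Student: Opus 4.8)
The plan is to turn the sign condition $\lambda\geq0$ into an inequality for the kinetic energy and then sandwich that quantity between the two Gagliardo--Nirenberg estimates. The starting point is the identity \eqref{lam}: since $\mu>0$, the hypothesis $\lambda\geq0$ gives at once the key inequality $\int_\G|u'|^2\dx\leq\int_\K|u|^p\dx$. From here the two bounds \eqref{alt} and \eqref{bas} come from estimating the right-hand side in two different ways, and the whole content of the lemma is the choice of which inequality to use for each.

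For the upper bound \eqref{alt} I would simply drop the restriction to $\K$, writing $\int_\K|u|^p\dx\leq\int_\G|u|^p\dx=\|u\|_{L^p(\G)}^p$, and apply \eqref{GN}. Abbreviating $K:=\int_\G|u'|^2\dx$ and using $\|u\|_{L^2(\G)}^2=\mu$, the inequality $\int_\G|u'|^2\dx\leq\|u\|_{L^p(\G)}^p$ becomes $K\leq\mathcal{C}_p\,\mu^{\frac{p+2}{4}}K^{\frac{p-2}{4}}$. Because $p<6$ the exponent $\frac{p-2}{4}$ is strictly less than $1$, so collecting the powers of $K$ gives $K^{\frac{6-p}{4}}\leq\mathcal{C}_p\,\mu^{\frac{p+2}{4}}$, and raising to the power $\frac{4}{6-p}$ yields \eqref{alt} exactly.

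For the lower bound \eqref{bas} I would instead keep the compact core and estimate crudely $\int_\K|u|^p\dx\leq\ell\,\|u\|_{L^\infty(\G)}^p$, using that $\ell$ is the measure of $\K$, and then insert \eqref{GNI}. This produces $K\leq\ell\,\mathcal{C}_\infty^{p}\,\mu^{\frac{p}{4}}K^{\frac{p}{4}}$. To divide by $K^{p/4}$ I need $K>0$; this holds because $K=0$ would force $u$ to be constant on the connected noncompact graph $\G$, hence $u\equiv0$, contradicting $\|u\|_{L^2(\G)}^2=\mu>0$. Dividing then gives $K^{-\frac{p-4}{4}}\leq\ell\,\mathcal{C}_\infty^{p}\,\mu^{\frac{p}{4}}$, and taking reciprocals produces precisely \eqref{bas}.

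I do not expect a genuine obstacle: the computations are short and both reduce to solving a scalar inequality of the form $K\leq A\,K^{\beta}$ for $K$. The only points requiring a little care are the positivity of $K$ in the lower bound, noted above, and the borderline exponent $p=4$, where $\frac{p-4}{4}=0$; there \eqref{bas} should be read as the assertion $\ell\,\mathcal{C}_\infty^{4}\mu\geq1$, which still follows from $K\leq\ell\,\mathcal{C}_\infty^{4}\mu\,K$ together with $K>0$. Thus the argument covers the whole range $p\in[4,6)$ uniformly, the decisive observation being that $\lambda\geq0$ traps the kinetic energy between the $L^p$ and the $L^\infty$ Gagliardo--Nirenberg inequalities.
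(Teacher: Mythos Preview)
Your proposal is correct and follows essentially the same route as the paper: derive $\int_\G|u'|^2\dx\le\int_\K|u|^p\dx$ from \eqref{lam} and $\lambda\ge0$, then feed this into \eqref{GN} for the upper bound and into the crude estimate $\int_\K|u|^p\dx\le\ell\|u\|_{L^\infty(\G)}^p$ together with \eqref{GNI} for the lower bound. Your justification of $K>0$ and your explicit handling of the borderline case $p=4$ are slightly more detailed than the paper's, but the argument is the same.
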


\begin{proof}
 Since $\lambda\geq0$, by \eqref{lam},
 \begin{equation}
  \label{aux}
  \int_{\G}|u'|^2\dx\leq\int_{\K}|u|^p\dx
 \end{equation}
 and hence, using \eqref{GN},
 \[
  \int_{\G}|u'|^2\dx\le \int_{\K}|u|^p\dx \le   \int_{\G}|u|^p\dx \le \mathcal{C}_p\mu^{\frac{p+2}{4}}\left(\int_{\G}|u'|^2\dx\right)^{\frac{p-2}{4}}.
 \]
 Observing that $u'\not\equiv0$ and then dividing by $\left(\int_{\G}|u'|^2\dx\right)^{\frac{p-2}{4}}$, one obtains
 \[
  \left(\int_{\G}|u'|^2\dx\right)^{\frac{6-p}{4}}\leq\mathcal{C}_p\mu^{\frac{p+2}{4}}
 \]
 and thus \eqref{alt} is proved.
 
Next, from \eqref{aux}, we also see that
 \[
 \int_{\G}|u'|^2\dx\le \int_{\K}|u|^p\dx \le \ell\|u\|_{L^{\infty}(\K)}^p  \le \ell\|u\|_{L^{\infty}(\G)}^p,
 \]
 and using \eqref{GNI}, we find
 \[
  \int_{\G}|u'|^2\dx\leq\mathcal{C}_{\infty}^p\ell\mu^{\frac{p}{4}}\left(\int_{\G}|u'|^2\dx\right)^{\frac{p}{4}}.
 \]
 Hence, as $p\geq4$, dividing by $\int_{\G}|u'|^2\dx$ and suitably rearranging terms, \eqref{bas} follows.
\end{proof}

Now we can prove a first nonexistence result. In its statement it is convenient to keep in mind Remark \ref{scale}.

\begin{theorem}
 \label{non_lapos}
 Let $p\in[4,6)$. Assume that
 \begin{equation}
  \label{mainass}
 \ell \mu^{\frac{p-2}{6-p}}<\mathcal{C}_{\infty}^{-p}\mathcal{C}_p^{\frac{4-p}{6-p}},
 \end{equation}
 where $\mathcal{C}_p$ and $\mathcal{C}_{\infty}$ are again the constants appearing in \eqref{GN} and \eqref{GNI}. Then, there are no bound states of mass $\mu$ with $\lambda\geq0$. In particular, there are no bound states supported on $\G$.
\end{theorem}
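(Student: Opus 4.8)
The plan is to argue by contradiction, exploiting the fact that the two estimates furnished by Lemma~\ref{twoest} become incompatible as soon as \eqref{mainass} holds. Suppose then that $u$ is a bound state of mass $\mu$ with $\lambda\ge0$, and write $A=\int_\G|u'|^2\dx$; here $A>0$, since a vanishing gradient would force $u$ to be constant on the connected graph $\G$, hence identically zero on the half-lines and thus globally zero, contradicting $\|u\|_{L^2(\G)}^2=\mu$. Lemma~\ref{twoest} then supplies simultaneously the upper bound \eqref{alt} on $A$ and the lower bound \eqref{bas} on $A^{(p-4)/4}$, and the whole point is that these two pieces of information cannot coexist under the hypothesis.

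The core step is to feed \eqref{alt} into \eqref{bas}. Since $p\ge4$, the map $t\mapsto t^{(p-4)/4}$ is nondecreasing on $(0,\infty)$ (it is the constant $1$ when $p=4$), so raising the upper bound \eqref{alt} to the power $(p-4)/4$ and comparing with \eqref{bas} yields
\[
\mathcal{C}_\infty^{-p}\ell^{-1}\mu^{-p/4}\le A^{(p-4)/4}\le \mathcal{C}_p^{(p-4)/(6-p)}\,\mu^{(p+2)(p-4)/(4(6-p))}.
\]
Rearranging so as to isolate $\ell$, a direct computation shows that the exponent of $\mu$ collapses: over the common denominator $4(6-p)$ its numerator equals $-4(p-2)$, so the combined power of $\mu$ is exactly $-(p-2)/(6-p)$. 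One is thus left with
\[
\mathcal{C}_\infty^{-p}\mathcal{C}_p^{(4-p)/(6-p)}\le \ell\,\mu^{(p-2)/(6-p)},
\]
in direct contradiction with \eqref{mainass}. Hence no bound state with $\lambda\ge0$ can exist. I expect this exponent bookkeeping --- making the powers of $\mu$ and of the two constants line up exactly with the scale--invariant quantity of Remark~\ref{scale} appearing in \eqref{mainass} --- to be the only genuinely delicate point, together with the need to handle the endpoint $p=4$ uniformly (there $(p-4)/4=0$ and \eqref{bas} alone already gives the contradiction, a degenerate case that the monotonicity viewpoint absorbs automatically).

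For the final assertion it suffices to show that every bound state supported on $\G$ has $\lambda\ge0$, so that the first part applies. If $u$ is supported on $\G$, then $u_e\not\equiv0$ on some half-line, where $\kappa\equiv0$ and \eqref{NLS} reduces to the linear equation $u_e''=\lambda u_e$ on $[0,+\infty)$. A nontrivial solution lying in $H^1(\R^+)$, hence in $L^2(\R^+)$, is impossible when $\lambda<0$, because the solutions are then oscillatory and not square--integrable, and equally impossible when $\lambda=0$, because the solutions are affine and only the trivial one is square--integrable. Therefore $\lambda>0$, in particular $\lambda\ge0$, and the nonexistence established above rules out all bound states supported on $\G$.
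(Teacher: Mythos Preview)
Your proof is correct and follows essentially the same approach as the paper: argue by contradiction, invoke the two estimates of Lemma~\ref{twoest}, and combine them to produce an inequality contradicting \eqref{mainass}; then observe that any bound state supported on $\G$ must have $\lambda>0$ because of the linear equation on the half-lines. The only cosmetic difference is that the paper treats $p=4$ and $p>4$ separately, whereas you absorb the endpoint case through the monotonicity of $t\mapsto t^{(p-4)/4}$; both arrive at the same contradiction.
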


\begin{proof}
Assume that there exists a bound state $u$ with $\lambda\geq0$ and mass $\mu$ satisfying \eqref{mainass}. If $p=4$, then \eqref{bas} immediately implies
 \[
  \ell\mu\geq\mathcal{C}_{\infty}^{-4},
 \]
which contradicts \eqref{mainass} when $p=4$.
 
When $p>4$, combining \eqref{bas} and \eqref{alt}, we find that
 \[
  1\leq\ell\mathcal{C}_{\infty}^p\mu^{\frac{p}{4}}\left(\mathcal{C}_p^{\frac{4}{6-p}}\mu^{\frac{p+2}{6-p}}\right)^{\frac{p-4}{4}},
 \]
 whence, with some easy computations,
 \[
 \ell \mu^{\frac{p-2}{6-p}}\geq\mathcal{C}_{\infty}^{-p}\mathcal{C}_p^{\frac{4-p}{6-p}},
 \]
contradicting again \eqref{mainass}. Thus there are no bound states with $\lambda \ge 0$. Note also that if $u$ is a bound state supported on $\G$, then by definition $u\not\equiv 0$ on at least one half-line. Since on half-lines $u'' = \lambda u$ and $u$ is $L^2$, necessarily $\lambda>0$, and this is why this type of bound states is ruled out.
\end{proof}

\begin{remark} Condition \eqref{mainass} could be made sharper by using the specific Gagliardo--Nirenberg constants of the graph $\G$. These however depend on the topology and on the metric properties of the graph (e.g. the lengths of its edges) in an unaccessible way, at least for now. We prefer to use the universal constants of the half-line $\mathcal{C}_p,\,\mathcal{C}_\infty$ for two reasons: first because in this way the inequalities \eqref{GN} and \eqref{GNI} hold for every noncompact graph, and secondly because these constants are explicit (see \cite{dolbeault}). For example, since $\mathcal{C}_\infty = \sqrt 2$, in the model case $p=4$ (and for every graph) condition 
\eqref{mainass} reads simply
\[
\ell \mu < \frac14.
\]
\end{remark}

Before proceeding, we note that assumption \eqref{mainass} is the condition that guarantees in \cite{T} the nonexistence of ground states. Here, however, it is used to prove a far stronger result. In addition, arguing as in the preceding results, one can slightly improve Theorem 3.4 in \cite{T} (raising the nonexistence threshold for ground states), with a completely different (and simpler) proof. 

\begin{corollary}
 Let $p\in[4,6)$. If
 \begin{equation}
  \label{mainassdue}
  \ell\mu^{\frac{p-2}{6-p}}<\left(\frac{p}{2}\right)^{\frac{2}{6-p}}\mathcal{C}_{\infty}^{-p}\mathcal{C}_p^{\frac{4-p}{6-p}},
 \end{equation}
then there is no ground state of mass $\mu$.
\end{corollary}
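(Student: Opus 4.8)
The plan is to exploit the extra information available for ground states, namely that they are \emph{minimizers} of $E$. The crucial point is that on a noncompact graph the ground state energy level is nonpositive. Indeed, since $\G$ carries at least one half-line, one can exhibit functions $v_n\in H_\mu^1(\G)$ supported on a single half-line, vanishing identically on $\K$, and increasingly spread out, so that $\|v_n'\|_{L^2(\G)}^2\to0$ while $\int_\K|v_n|^p\dx=0$; hence $E(v_n)\to0$ and $\inf_{H_\mu^1(\G)}E\le0$. Therefore any ground state $u$ satisfies $E(u)\le0$, which by \eqref{functe} means
\[
\int_\G|u'|^2\dx\le\frac2p\int_\K|u|^p\dx.
\]
This is strictly sharper than \eqref{aux} (valid for general bound states with $\lambda\ge0$), the improvement being precisely the factor $\tfrac2p$.

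Next I would rerun the two estimates of Lemma \ref{twoest}, replacing \eqref{aux} with the sharper inequality above. Carrying the factor $\tfrac2p$ through the computation and applying \eqref{GN} as before yields the refined form of \eqref{alt},
\[
\int_\G|u'|^2\dx\le\Bigl(\tfrac2p\Bigr)^{\frac4{6-p}}\mathcal{C}_p^{\frac4{6-p}}\mu^{\frac{p+2}{6-p}},
\]
and applying \eqref{GNI} together with $\int_\K|u|^p\dx\le\ell\|u\|_{L^\infty(\G)}^p$ yields the refined form of \eqref{bas},
\[
\Bigl(\int_\G|u'|^2\dx\Bigr)^{\frac{p-4}4}\ge\frac p2\,\mathcal{C}_\infty^{-p}\ell^{-1}\mu^{-\frac p4}.
\]
As in the original proof, $u'\not\equiv0$ (a nonzero constant cannot belong to $L^2(\G)$ because of the half-lines), so all the divisions are legitimate.

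Finally I would combine the two refined bounds exactly as in the proof of Theorem \ref{non_lapos}. When $p=4$ the refined \eqref{bas} already gives $\ell\mu\ge2\mathcal{C}_\infty^{-4}$, contradicting \eqref{mainassdue} at $p=4$. When $p>4$, multiplying the two estimates and collecting the powers of $\mu$ (the exponent again collapses to $-\tfrac{p-2}{6-p}$ by the same algebra used in Theorem \ref{non_lapos}) leads to
\[
\ell\mu^{\frac{p-2}{6-p}}\ge\Bigl(\tfrac p2\Bigr)^{\frac2{6-p}}\mathcal{C}_\infty^{-p}\mathcal{C}_p^{\frac{4-p}{6-p}},
\]
where the constant $(p/2)^{2/(6-p)}$ comes from the simplification $\tfrac p2\bigl(\tfrac2p\bigr)^{-\frac{p-4}{6-p}}=(p/2)^{1+\frac{p-4}{6-p}}=(p/2)^{\frac2{6-p}}$. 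This contradicts \eqref{mainassdue}, so no ground state of mass $\mu$ exists.

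The only genuinely new ingredient — and the step I would take most care over — is the nonpositivity $E(u)\le0$ of the ground state energy, which is where the minimality (rather than mere criticality) of $u$ enters and where the factor $\tfrac2p$ originates. Everything after that is a verbatim rerun of Lemma \ref{twoest} and Theorem \ref{non_lapos} with this harmless extra constant carried along.
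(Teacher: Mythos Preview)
Your proof is correct and follows essentially the same approach as the paper: both establish $\inf_{H_\mu^1(\G)}E\le0$ (the paper by citing \cite{T}, you by the explicit spreading argument on a half-line), deduce $\int_\G|u'|^2\dx\le\frac2p\int_\K|u|^p\dx$ for a putative ground state, and then rerun the two estimates of Lemma~\ref{twoest} with the extra factor $\tfrac2p$ to reach the same contradiction. The only cosmetic difference is that the paper phrases the argument as ``$E(u)>0$ for every $u\in H_\mu^1(\G)$, hence the infimum is not attained'', whereas you phrase it contrapositively.
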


\begin{proof} By Theorem 3.1 of \cite{T}, for every $\mu>0$,
\begin{equation}
\label{levneg}
\inf_{u\in H_{\mu}^1(\G)} E(u)\leq0.
\end{equation}
If $u\in H_{\mu}^1(\G)$ satisfies  $E(u)\leq0$, by \eqref{functe},
 \[
  \int_{\G}|u'|^2\dx\leq\frac{2}{p}\int_{\K}|u|^p\dx.
 \]
 Now, arguing as in the proof of Lemma \ref{twoest} one obtains that
 \[
  \int_{\G}|u'|^2\dx\leq\left(\frac{2\mathcal{C}_p}{p}\right)^{\frac{4}{6-p}}\mu^{\frac{p+2}{6-p}}
 \]
 and
 \[
  \left(\int_{\G}|u'|^2\dx\right)^{\frac{p-4}{4}}\geq\mathcal{C}_{\infty}^{-p}\left(\frac{2\ell}{p}\right)^{-1}\mu^{-\frac{p}{4}}.
 \]
 Combining the two inequalities, there results
 \[
\ell  \mu^{\frac{p-2}{6-p}}\geq\left(\frac{p}{2}\right)^{\frac{2}{6-p}}\mathcal{C}_{\infty}^{-p}\mathcal{C}_p^{\frac{4-p}{6-p}},
 \]
 which contradicts \eqref{mainassdue}. Summing up, if $\mu$ satisfies \eqref{mainassdue}, then $E(u)>0$ for every $u\in H_{\mu}^1(\G)$. 
 In view of \eqref{levneg}, this  entails that there is no ground state of mass $\mu$.
\end{proof}

For certain classes of graphs Theorem \ref{non_lapos} can be used to prove a full nonexistence result for bound states of small mass, independently of their support. The general class of graphs that enjoys this property is that of {\em trees} with at most one {\em pendant} (Definitions \ref{pendant} and \ref{tree}). Some significative examples of this type of graphs are depicted in Figure \ref{examples}. 

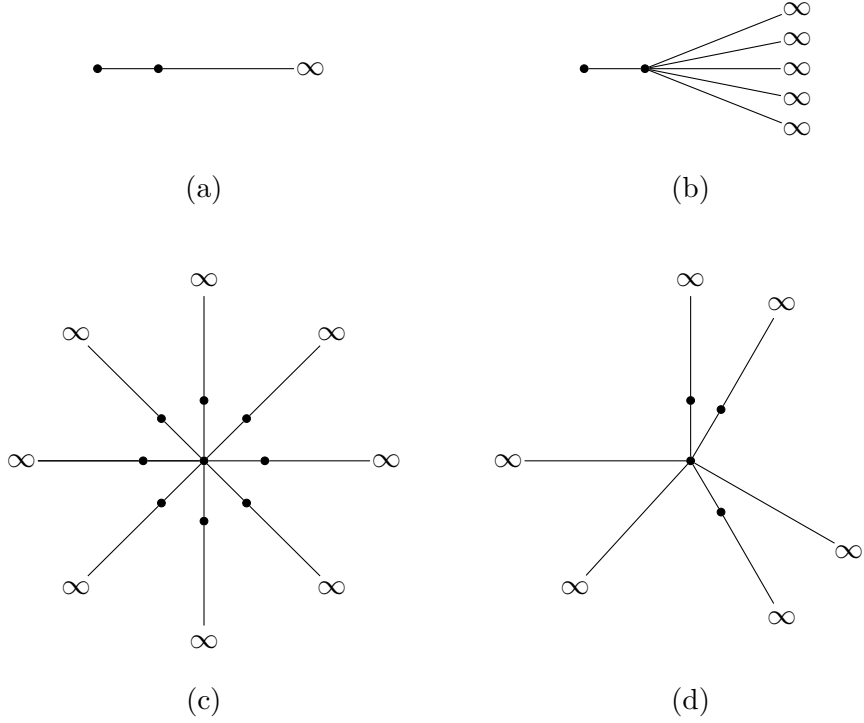
\begin{figure}[th]
\begin{center}
\begin{tikzpicture}[xscale= 0.4,yscale=0.4]
\node at (-8,0) [nodo] (00) {};
\node at (-6,0) [nodo] (20) {};
\node at (-1,0) [infinito] (70) {$\infty$};
\node at (-1,-5.35) [infinito] (fittizio) {};
\draw[-] (00)--(20);
\draw[-] (20)--(70);
\node at (-4.5,-4) [infinito] () {(a)};
\node at (8,0) [nodo] (00) {};
\node at (10,0) [nodo] (20) {};
\node at (15,2) [infinito] (72) {$\infty$};
\node at (15,1) [infinito] (71) {$\infty$};
\node at (15,0) [infinito] (70) {$\infty$};
\node at (15,-1) [infinito] (7-1) {$\infty$};
\node at (15,-2) [infinito] (7-2) {$\infty$};
\draw[-] (00)--(20);
\draw[-] (20)--(72);
\draw[-] (20)--(71);
\draw[-] (20)--(70);
\draw[-] (20)--(7-1);
\draw[-] (20)--(7-2);
\node at (11.5,-4) [infinito] () {(b)};
\node at (-4.5,-13) [nodo] (00) {};
\node at (-2.5,-13) [nodo] (20) {};
\node at (-6.5,-13) [nodo] (-20) {};
\node at (-4.5,-11) [nodo] (02) {};
\node at (-4.5,-15) [nodo] (0-2) {};
\node at (-3.1,-11.6) [nodo] (1414) {};
\node at (-3.1,-14.4) [nodo] (14-14) {};
\node at (-5.9,-11.6) [nodo] (-1414) {};
\node at (-5.9,-14.4) [nodo] (-14-14) {};
\node at (1.5,-13) [infinito] (60) {$\infty$};
\node at (-10.5,-13) [infinito] (-60) {$\infty$};
\node at (-4.5,-7) [infinito] (06) {$\infty$};
\node at (-4.5,-19) [infinito] (0-6) {$\infty$};
\node at (-.3,-8.8) [infinito] (4242) {$\infty$};
\node at (-.3,-17.2) [infinito] (42-42) {$\infty$};
\node at (-8.7,-8.8) [infinito] (-4242) {$\infty$};
\node at (-8.7,-17.2) [infinito] (-42-42) {$\infty$};
\draw[-] (00)--(20);
\draw[-] (00)--(02);
\draw[-] (00)--(-20);
\draw[-] (00)--(0-2);
\draw[-] (00)--(-20);
\draw[-] (00)--(1414);
\draw[-] (00)--(14-14);
\draw[-] (00)--(-1414);
\draw[-] (00)--(-14-14);
\draw[-] (60)--(20);
\draw[-] (06)--(02);
\draw[-] (-60)--(-20);
\draw[-] (0-6)--(0-2);
\draw[-] (-60)--(-20);
\draw[-] (4242)--(1414);
\draw[-] (42-42)--(14-14);
\draw[-] (-4242)--(-1414);
\draw[-] (-42-42)--(-14-14);
\node at (-4.5,-21) [infinito] () {(c)};
\node at (11.5,-13) [nodo] (00) {};
\node at (11.5,-11) [nodo] (02) {};
\node at (11.5,-7) [infinito] (06) {$\infty$};
\node at (5.5,-13) [infinito] (-60) {$\infty$};
\node at (12.5,-11.3) [nodo] (117) {};
\node at (14.5,-7.8) [infinito] (352) {$\infty$};
\node at (12.5,-14.7) [nodo] (1-17) {};
\node at (14.5,-18.2) [infinito] (3-52) {$\infty$};
\node at (16.7,-16) [infinito] (52-3) {$\infty$};
\node at (7.7,-17.2) [infinito] (-42-42) {$\infty$};
\draw[-] (00)--(02);
\draw[-] (06)--(02);
\draw[-] (00)--(-60);
\draw[-] (00)--(117);
\draw[-] (117)--(352);
\draw[-] (00)--(1-17);
\draw[-] (1-17)--(3-52);
\draw[-] (00)--(52-3);
\draw[-] (00)--(-42-42);
\node at (11.5,-21) [infinito] () {(d)};
\end{tikzpicture}
\end{center}

\caption{\footnotesize{topical examples of noncompact trees: (a) segment and half-line; (b) segment and several half-lines; (c) $N$--star graph with nonlinearity affecting a compact portion of each half-line; (d) $N$--star graph with nonlinearity affecting only some half-lines.}}
\label{examples}
\end{figure}

The particular feature of this class is that it is possible to prove {\em a priori} and without any restriction on $p > 2$ and $\mu$ that they do not admit any bound state with Lagrange multiplier $\lambda \le 0$.

\begin{theorem}
\label{nonex}
Let $\G$ be a noncompact tree with at most one pendant. Then for every $p > 2$ and every $\mu>0$ there is no bound state of mass $\mu$ with $\lambda\leq0$.
\end{theorem}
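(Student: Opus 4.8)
The plan is to reduce the problem to the compact core $\K$ by exploiting the linearity of the equation on the half-lines, and then to peel off the edges of $\K$ one at a time, using uniqueness for the Cauchy problem together with the Kirchhoff conditions, until the tree structure and the bound on the number of pendants force $u\equiv0$.

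First I would treat the half-lines. On each half-line $\kappa\equiv0$, so \eqref{NLS} reduces to the linear equation $u''=\lambda u$; since $\lambda\le0$ and $u\in L^2$, the only admissible solution is $u\equiv0$ (for $\lambda<0$ the solutions are oscillatory, for $\lambda=0$ they are affine, and neither is square-integrable unless trivial). Hence $u$ is supported on $\K$, and at every vertex $\vv$ carrying a half-line one has both $u(\vv)=0$ (by continuity) and a vanishing contribution of that half-line to the sum in \eqref{kirchhoff}.

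The heart of the argument is a pruning procedure on the finite tree $\K$, which is connected since removing the half-lines (each incident to a vertex of degree one at infinity) cannot disconnect a tree. Call a leaf of a subtree \emph{forced} if both $u$ and the inward derivative of $u$ along its unique edge vanish there: by uniqueness for the initial value problem associated with \eqref{NLS} (the map $z\mapsto|z|^{p-2}z$ is locally Lipschitz on $\C$ for every $p>2$, since its differential is $O(|z|^{p-2})$ near the origin), the whole incident edge then carries $u\equiv0$. Two observations drive the induction: (i) every leaf of $\K$ that is not a pendant carries a half-line, hence has $u=0$ there and, by \eqref{kirchhoff}, vanishing inward derivative, so it is forced; (ii) deleting a forced leaf-edge $e_0$ preserves \eqref{kirchhoff} at its other endpoint $\vv'$ (the deleted edge contributed $0$ to the balance) and, because $u(\vv')=0$, any leaf newly exposed at $\vv'$ is again forced. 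Thus I would repeatedly delete a forced leaf-edge, each time recording that $u\equiv0$ on it.

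The argument then closes by counting, and this is exactly where the hypothesis enters. Since $\G$ has at most one pendant and all non-forced leaves are original pendants, at every stage the current subtree has at most one non-forced leaf. But a finite tree containing an edge has at least two leaves, so whenever no forced leaf remains the subtree can have no edge at all; since at each stage with an edge a forced leaf is available to delete, the pruning runs to completion and removes every edge of $\K$ with $u\equiv0$ on it. Therefore $u\equiv0$ on all of $\G$, contradicting $\|u\|_{L^2(\G)}^2=\mu>0$. I expect the main obstacle to lie not in any single estimate but in the bookkeeping of the pruning step, namely checking carefully that deleting a forced edge preserves the Kirchhoff balance on the remainder and that every newly created leaf is forced; the assumption of at most one pendant is precisely what guarantees that this process can always be started and carried through to the empty subtree.
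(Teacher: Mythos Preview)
Your proof is correct and proceeds along the same lines as the paper's: both show $u\equiv0$ on the half-lines via the linear equation, and then exploit Cauchy uniqueness together with continuity and the Kirchhoff condition to propagate the vanishing through the tree. The only difference is one of orientation---the paper starts from an edge where $u\not\equiv0$ and follows a path of such edges outward until it hits a half-line (a contradiction), whereas you prune forced leaf-edges inward until the compact core is exhausted; these are dual organizations of the same argument, and your pruning version handles the single-pendant case slightly more uniformly than the paper's case distinction.
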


\begin{proof} Assume that $u$ is a bound state of mass $\mu$ with $\lambda \le 0$ ($p > 2$ and $\mu>0$ are understood).
Clearly $u\equiv 0$ on every half-line of $\G$, since there $u''=\lambda u$ with $\lambda \le 0$, and then, if $u$ does not vanish identically, it cannot be in $L^2(\G)$.

We also note the following property: if $\vv$ is a vertex of degree $n\ge 2$ and if $u$ vanishes
identically on $n-1$ edges incident at $\vv$, then it vanishes on all $n$ edges. In order to see this, identify an edge $e$ incident at $\vv$ with $[0,\ell_e]$ (attached at $\vv$ when $x=0$) or with $[0,+\infty)$ and assume that $u\equiv 0$ on all other edges incident at $\vv$. Then, by continuity, $u_e(0) = 0$ and, by the Kirchhoff conditions, $u_e'(0)=0$. Therefore, by the uniqueness of the solution of the Cauchy problem

\[
 \left\{
 \begin{array}{l}
  \displaystyle u_e'' + \kappa(x)|u_e|^{p-2}\,u_e = \lambda u_e\\[.2cm]
  \displaystyle u_e(0) = 0\\[.2cm]
  \displaystyle u_e'(0)=0,
 \end{array}
 \right.
\]
one obtains that $u_e\equiv 0$ on $e$.

Now, consider first trees with no pendants. As $\mu >0$,  $u\not\equiv 0$ on some (necessarily finite) edge $e_1$. Let $\vv_1$ be a vertex of $e_1$. By the preceding property there is at least one edge $e_2\ne e_1$ incident at $\vv_1$ where $u\not\equiv0$. If $e_2$ is a half-line we have reached a contradiction. Otherwise, $e_2$ is a finite edge and hence one can repeat the procedure starting from $e_2$. In this way one can construct a path starting from $e_1$ and consisting of edges where $u\not\equiv0$. Since $\G$ is a tree, it contains no cycles, and therefore the last edge of the path is a half-line, where $u\equiv 0$, and this is a contradiction.

Assume, finally, that the graph has a single pendant $e$. If $u\not\equiv 0$ on $e$, then we set $e_1 =e$ and we repeat the above argument. On the other hand, if $u\equiv0$ on $e$, we remove $e$ from the graph, and we are in the preceding case. In every case we reach a contradiction whenever $u\not \equiv 0$ on some edge.
\end{proof}

\begin{remark}
 The preceding result is valid also in the everywhere nonlinear case (with the same proof). In other words, by Theorem \ref{nonex}, the NLS equation  with  nonlinearity on the whole of $\G$ does not admit solutions for $\lambda\leq0$ on trees with at most one pendant.
\end{remark}

\begin{remark}
\label{false}
If $\G$ is not a tree, or if it is a tree with {\em at least} two pendants, then Theorem \ref{nonex} is false (see Section \ref{cyc_pend}).
\end{remark}

Combining Theorems \ref{non_lapos} and \ref{nonex}, we can prove the following nonexistence result.

\begin{corollary}
Let $\G$ be a tree with at most one pendant. Assume that $p\in[4,6)$ and that 
\begin{equation}
\label{bis}
\mu^{\frac{p-2}{6-p}}\ell<\mathcal{C}_{\infty}^{-p}\mathcal{C}_p^{\frac{4-p}{6-p}}
\end{equation}
Then there is no bound state of mass $\mu$.
\end{corollary}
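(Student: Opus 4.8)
The plan is to reduce the statement to the two preceding theorems by splitting on the sign of the Lagrange multiplier. Suppose, for contradiction, that $u$ is a bound state of mass $\mu$ on $\G$. By Definition \ref{defbs} there is an associated $\lambda\in\R$, and every real number satisfies either $\lambda\geq 0$ or $\lambda\leq 0$. The idea is that each of these two regimes is already ruled out by one of the theorems, so no bound state can exist.

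First I would observe that the hypothesis \eqref{bis} is literally identical to assumption \eqref{mainass} of Theorem \ref{non_lapos} (both read $\ell\mu^{\frac{p-2}{6-p}}<\mathcal{C}_{\infty}^{-p}\mathcal{C}_p^{\frac{4-p}{6-p}}$, the two factors on the left commuting). Since $p\in[4,6)$ by assumption, Theorem \ref{non_lapos} applies directly and excludes the existence of any bound state of mass $\mu$ with $\lambda\geq 0$. This disposes of the first case without any further work.

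For the complementary case $\lambda\leq 0$, I would invoke Theorem \ref{nonex}. Here the key point is that $\G$ is assumed to be a noncompact tree with at most one pendant, which is precisely the hypothesis of that theorem; moreover Theorem \ref{nonex} holds for \emph{every} $p>2$ and \emph{every} $\mu>0$, so in particular for the $p$ and $\mu$ at hand, with no need to invoke condition \eqref{bis}. It therefore rules out the existence of a bound state of mass $\mu$ with $\lambda\leq 0$. Combining the two cases, no value of $\lambda\in\R$ is admissible, contradicting the assumed existence of $u$.

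I do not expect a genuine obstacle in this argument, since it is a clean case split that exactly matches the scopes of the two theorems; the only point requiring a moment of care is to check that the two hypotheses on $p$ are compatible (Theorem \ref{non_lapos} needs $p\in[4,6)$, Theorem \ref{nonex} needs only $p>2$, and the corollary's assumption $p\in[4,6)$ meets both), and that the two sign conditions $\lambda\geq 0$ and $\lambda\leq 0$ together exhaust $\R$, so that no bound state of any kind survives.
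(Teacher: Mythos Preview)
Your proposal is correct and follows exactly the same approach as the paper: a case split on the sign of the Lagrange multiplier, invoking Theorem \ref{nonex} for $\lambda\le 0$ and Theorem \ref{non_lapos} for $\lambda\ge 0$ (the paper phrases the split as $\lambda\le 0$ versus $\lambda>0$, but this is immaterial).
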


\begin{proof} Since $\G$ is a tree with at most one pendant, there are no bound states with $\lambda \le 0$ (for any $\mu$) by Theorem  \ref{nonex}. When $\lambda >0$, condition \eqref{bis} excludes bound states via Theorem \ref{non_lapos}.
\end{proof}

\section{Bound states with compact support}
\label{cyc_pend}

The aim of this section is to discuss more in detail the claim of Remark \ref{false}. Precisely, we construct some examples that make clear the reasons why Theorem \ref{nonex} does not apply to graphs containing cycles and/or two or more pendants.

The technique we use below has been formerly introduced for studying the everywhere nonlinear problem on the \emph{tadpole} graph (Figure \ref{tadpole}). In particular, the case $p=4$ was studied in \cite{CFN} while \cite{NPS} deals with the general case $p\in(2,6)$.

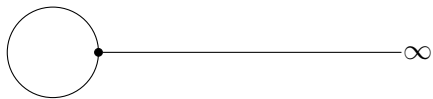
\begin{figure}[ht]
\begin{center}
\begin{tikzpicture}[xscale= 0.6,yscale=0.6]

\node at (0,0) [nodo] (00) {};
\node at (7,0) [infinito] (70) {$\infty$};

\draw [-] (00) -- (70);
\draw (-1,0) circle (1cm);

\end{tikzpicture}
\end{center}
\caption{\footnotesize{a \emph{tadpole} graph.}}
\label{tadpole}
\end{figure}

The obstruction in constructing solutions that vanish identically on the half-lines of the graph, is that one forces the value
of the solution to be zero at the vertices where the half-lines are incident but wants, at the same time, to preserve the Kirchhoff
condition. In order to do this we make use of the following lemma,
which is an immediate consequence of Proposition 2.4 of \cite{NPS}. 

\begin{lemma}
\label{int}
For every $L>0$, $p>2$ and $\lambda\in\R$, there exists an 
odd, L--periodic, smooth function $\phi$ that solves
\[
\phi''+|\phi|^{p-2}\phi=\lambda\phi.
\]
\end{lemma}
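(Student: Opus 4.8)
The plan is to read the equation as an autonomous, conservative second–order ODE and to produce the required functions by a phase–plane analysis: I exhibit a one–parameter family of periodic orbits passing through the origin, observe that each of them is automatically odd, and then tune the parameter so that the period fits into $L$.

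First I would write the equation as $\phi''=f(\phi)$ with $f(\phi)=\lambda\phi-|\phi|^{p-2}\phi$, and record the conserved energy $\tfrac12(\phi')^2+W(\phi)=E$, where $W(\phi)=-\tfrac{\lambda}{2}\phi^2+\tfrac1p|\phi|^p$. Since $p>2$, the map $g(\phi)=|\phi|^{p-2}\phi$ is $C^1$ with $g'(\phi)=(p-1)|\phi|^{p-2}\to0$ as $\phi\to0$, so $f\in C^1$ and the Cauchy problem is uniquely solvable. Moreover $f$ is odd, and this gives oddness for free: if $\phi$ solves the equation with $\phi(0)=0$, then $\psi(t):=-\phi(-t)$ solves the same equation with $\psi(0)=0$ and $\psi'(0)=\phi'(0)$, whence $\psi\equiv\phi$, i.e. $\phi(-t)=-\phi(t)$. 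Thus it suffices to produce, for a suitable amplitude, a nonconstant periodic solution vanishing at the origin.

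Next I construct these orbits by inspecting $W$. If $\lambda\le0$ the origin is a minimum of $W$ (a center in the phase plane), while if $\lambda>0$ it is a local maximum (a saddle), with genuine wells at $\pm\lambda^{1/(p-2)}$. In every case, fixing any energy $E>0$ yields a level set that is a closed curve, symmetric under $\phi\mapsto-\phi$, crossing $\{\phi=0\}$ at $\phi'=\pm\sqrt{2E}\neq0$ and attaining amplitude $A=A(E)$ determined by $W(A)=E$ (here the dominance of $\tfrac1p|\phi|^p$ guarantees $W\to+\infty$, so $A$ is well defined). The corresponding solution is nonconstant, periodic, and odd by the previous step, with quarter–period
\[
\frac{T}{4}=\int_0^{A}\frac{d\phi}{\sqrt{2\bigl(E-W(\phi)\bigr)}}.
\]
This integral depends continuously on $A$, and the substitution $\phi=As$ together with $W(As)\approx\tfrac1pA^p(1-s^p)$ for large $A$ gives $T\approx c_p\,A^{1-p/2}\to0$ as $A\to\infty$, since $p>2$. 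Hence $T$ attains arbitrarily small positive values, and being a continuous image of an interval its range contains $(0,\delta)$ for some $\delta>0$.

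The conclusion is then immediate: given $L$, choose $n\in\N$ with $L/n<\delta$ and select the energy so that the associated solution $\phi$ has (minimal) period $L/n$; this $\phi$ is odd, $L$–periodic and nonconstant, and solves the equation. As for regularity, from $\phi''=f(\phi)$ and $f\in C^1$ one bootstraps $\phi\in C^2$, indeed $\phi\in C^3$, so $\phi$ is a classical solution, and it is $C^\infty$ at every point where it does not vanish (its zeros being isolated). I expect the main obstacle to be the case $\lambda>0$: because the origin is a saddle, one must verify that the large orbits encircling both wells really exist, cross the origin transversally, and carry a well–defined continuous period, and — the quantitative crux — that this period collapses to $0$ in the large–amplitude limit, which is exactly what the scaling estimate above provides.
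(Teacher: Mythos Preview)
Your phase--plane argument is correct and entirely self--contained, whereas the paper does not prove the lemma at all: it simply records it as an immediate consequence of Proposition~2.4 of \cite{NPS}. So there is no ``paper's proof'' to compare with beyond that citation; what you have written is presumably close in spirit to the argument behind that proposition, but you supply the details explicitly --- the odd symmetry from uniqueness of the Cauchy problem, the closed level curves $\{E>0\}$ (which, when $\lambda>0$, encircle both wells and the saddle, crossing $\phi=0$ transversally at $\phi'=\pm\sqrt{2E}$), the quarter--period formula, and the scaling $T\sim c_p A^{1-p/2}\to 0$ that allows you to realise any sufficiently small period and hence $L/n$ for $n$ large. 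The continuity of $T$ in $A$ is indeed standard (smooth dependence on initial data, away from equilibria and the separatrix $E=0$), so your passage from ``$T\to 0$'' to ``$(0,\delta)\subset\mathrm{range}(T)$'' is legitimate.

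One small point worth making explicit: for $p\in(2,3]$ the nonlinearity $|\phi|^{p-2}\phi$ is $C^1$ but not $C^2$ at $0$, so your bootstrap gives $\phi\in C^3$ but not $C^\infty$ at the (isolated) zeros. You already flag this. Since the paper only uses $\phi$ to build $H^1$ bound states satisfying \eqref{NLS} classically, ``smooth'' in the statement should be read as ``classical $C^2$ solution'', and your construction delivers that.
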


Using this lemma we can exhibit a wide class of graphs with cycles that violate Theorem \ref{nonex}.

\begin{theorem}
\label{theocyc}
Let $p>2$ and $\lambda\in\R$. Assume that $\G$ contains a cycle $\mathscr{C}$, whose edges have pairwise commensurable lengths. Then there exists $\mu>0$ for which there is at least a bound state of mass $\mu$, with Lagrange multiplier $\lambda$ and supported on $\K$.
\end{theorem}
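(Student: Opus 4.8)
The plan is to build a bound state $u$ that lives only on the cycle $\mathscr{C}$: on $\mathscr{C}$ it will coincide with a suitable periodic solution of the stationary equation, and it will vanish identically on every edge of $\G$ outside $\mathscr{C}$ (in particular on all half-lines, so that $u$ is supported on $\K$). Since $\mathscr{C}$ consists of bounded edges only (a half-line can never belong to a cycle), we have $\mathscr{C}\subseteq\K$, so that $\kappa\equiv1$ there and the equation to be solved on $\mathscr{C}$ is exactly $u''+|u|^{p-2}u=\lambda u$, i.e. the one solved by the profile $\phi$ furnished by Lemma \ref{int}. Recall that $\phi$ is odd and $L$-periodic, hence $\phi(mL)=0$ for every $m\in\N$; the whole construction hinges on choosing the period $L$ in accordance with the lengths of the edges of $\mathscr{C}$.

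This is where commensurability enters. Writing $e_1,\dots,e_k$ for the edges of $\mathscr{C}$ and $\ell_1,\dots,\ell_k$ for their lengths, the hypothesis yields a common measure $d>0$ and positive integers $m_1,\dots,m_k$ with $\ell_i=m_i d$. I would then invoke Lemma \ref{int} with $L=d$ (and the prescribed $p,\lambda$), so that each $\ell_i$ is an integer multiple of $L$ and, consequently, the total length $\ell_{\mathscr{C}}=\sum_i\ell_i=\big(\sum_i m_i\big)L$ is a multiple of $L$ as well. Parametrizing $\mathscr{C}$ by arc length $s$ and setting $u(s)=\phi(s)$ on $\mathscr{C}$ (and $u\equiv0$ elsewhere) produces a well-defined function on the loop, because $\phi$ is $L$-periodic and the loop length is a multiple of $L$; moreover $u$ is smooth along $\mathscr{C}$, and every vertex of $\mathscr{C}$, sitting at an arc-length position $\sum_{i<j}\ell_i$ that is a multiple of $L$, is a zero of $\phi$, so that $u$ vanishes at all vertices of the cycle.

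It then remains to verify that $u$ is a bound state in the sense of Definition \ref{defbs}. Equation \eqref{NLS} holds on each edge: on $\mathscr{C}$ because $u=\phi$ solves $u''+|u|^{p-2}u=\lambda u$ with $\kappa\equiv1$, and on every other edge trivially because $u\equiv0$. Continuity holds since $u$ vanishes at the cycle vertices, matching the identically zero function on the adjacent exterior edges. For the Kirchhoff condition \eqref{kirchhoff} I would argue vertex by vertex: at a vertex not on $\mathscr{C}$ all incident components vanish, and there is nothing to prove; at a vertex $\vv$ of $\mathscr{C}$ the only nonzero contributions come from the two cycle edges meeting at $\vv$, and since $\frac{du_e}{dx_e}(\vv)$ is the derivative of $u$ in the direction pointing away from $\vv$ into $e$, these two directions are opposite along the smooth loop, so the two contributions equal $+\frac{du}{ds}(\vv)$ and $-\frac{du}{ds}(\vv)$ and cancel (exterior edges at $\vv$, if any, add nothing). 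Finally $u\not\equiv0$ because $\phi\not\equiv0$, whence $\mu:=\|u\|_{L^2(\G)}^2=\big(\sum_i m_i\big)\int_0^L|\phi|^2\,ds>0$; and since $u$ solves \eqref{NLS} with the prescribed constant, $\lambda$ is its Lagrange multiplier, consistently with \eqref{lam}. By construction $u$ is supported on $\K$.

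The delicate point, and the place where commensurability is indispensable, is the simultaneous fulfilment of two closure requirements around the cycle: on one hand $u$ must vanish at \emph{every} vertex of $\mathscr{C}$ (so that it glues continuously to the zero function outside and the Kirchhoff sum reduces to the two cycle edges), and on the other hand the periodic profile must close up into a single-valued smooth function after a full turn (so that the two cycle contributions at each vertex really cancel). Both are achieved precisely because a single period $L=d$ divides every edge length, which is exactly what commensurability provides; had the lengths been incommensurable, no common period could align all the vertices with the zeros of $\phi$ while also closing the loop. A convenient way to keep the bookkeeping transparent is to orient $\mathscr{C}$ consistently and rely on the intrinsic, coordinate-free meaning of the outgoing derivative, which removes any sign ambiguity coming from the individual edge orientations.
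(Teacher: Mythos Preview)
Your proof is correct and follows essentially the same approach as the paper: choose a common period $L$ dividing all edge lengths of $\mathscr{C}$, place the odd $L$-periodic solution $\phi$ from Lemma \ref{int} along the cycle (identified with $[0,(\sum_i m_i)L]$), extend by zero elsewhere, and read off $\mu$ as the resulting $L^2$ mass. Your write-up is in fact more explicit than the paper's in checking continuity and the Kirchhoff condition at the cycle vertices, but the construction and the role of commensurability are identical.
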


\begin{remark}
The rational dependence assumption on the lengths of the edges clearly prevents this result to be valid in generically (see  \cite{BL} for the discussion
of a similar phenomenon in the linear case). However the main feature of the theorem is that it shows that Theorem \ref{nonex} cannot be expected to hold when the graph contains a cycle.

\end{remark}

\begin{proof}[Proof of Theorem \ref{theocyc}]
Let $e_1,\dots,e_n$ denote the consecutive edges of $\mathscr{C}$ (see for instance Figure \ref{figciclo}). 
 \begin{figure}[ht]
 \begin{center}
 \begin{tikzpicture}[xscale= 0.6,yscale=0.6]

 \node at (0,0) [nodo] (00) {};
 \node at (-.5,0) [infinito] () {$\vv$};
 \node at (2,2) [nodo] (22) {};

 \node at (.6,1.3) [infinito] () {$e_1$};
 \node at (5,2.5) [nodo] (525) {};

 \node at (3.4,2.7) [infinito] () {$e_2$};
 \node at (5,.5) [nodo] (55) {};
 
 \node at (5.5,1.5) [infinito] () {$e_3$};
 \node at (7,-1.5) [nodo] (715) {};
 
 \node at (6.5,-.5) [infinito] () {$e_4$};
 \node at (3.5,-2.5) [nodo] (35-25) {};

 \node at (5.5,-2.4) [infinito] () {$e_5$};
 \node at (1.4,-1.5) [infinito] () {$e_6$};
 
 \draw [-] (00) -- (22);
 \draw [-] (525) -- (22);
 \draw [-] (525) -- (55);
 \draw [-] (715) -- (55);
 \draw [-] (715) -- (35-25);
 \draw [-] (00) -- (35-25);
 \draw [dashed] (00) -- (-3,-4);
 \draw [dashed] (22) -- (-3,2);
 \draw [dashed] (525) -- (10,2.5);
 \draw [dashed] (55) -- (2,-.5);
 \draw [dashed] (715) -- (10,-4.5);
 \draw [dashed] (35-25) -- (9,-5.5);
 \draw [dashed] (35-25) -- (-1.5,-3.5);

 \end{tikzpicture}
 \end{center}
 \caption{\footnotesize{an example of a cycle $\mathscr{C}$ consisting of 6 edges}.}
 \label{figciclo}
 \end{figure}
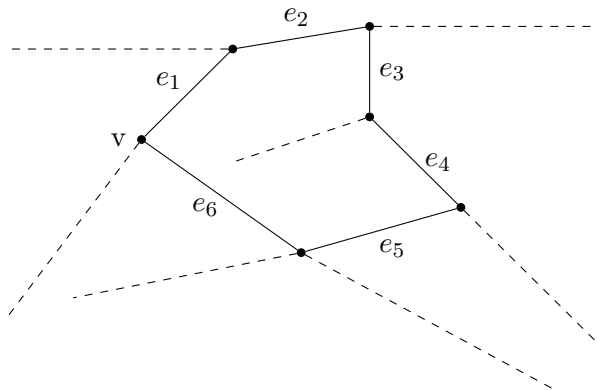
 Since the lengths $\ell_1,\dots,\ell_n$ of these edges are pairwise commensurable,  there exists $L>0$ such that
\[
\ell_i=k_i L,\quad\text{with }k_i\in\N,\quad\forall i\in\{1,\dots,n\}.
\]
Set $k = \sum_{i=1}^n k_i$.  The cycle $\mathscr{C}$ can be identified with the interval $[0,kL]$, the coordinate $x=0$ corresponding to a fixed vertex $\vv$. Clearly, at each vertex the coordinate $x$ (read in $[0,kL]$) is an integer multiple of $L$.

Now, for fixed (but arbitrary) $\lambda\in\R$ and $p>2$, let $\phi$ be the odd $L$--periodic function obtained via Lemma \ref{int}. 
In view of the identification of the cycle $\mathscr{C}$ with the interval $[0,kL]$, the function $\phi$ can be seen as a function 
on $\mathscr{C}$ that vanishes at the vertices of $\mathscr{C}$. Setting
\[
u(x)=\begin{cases}
\phi(x) & \text{if } x\in \mathscr{C}\\
0 & \text{otherwise on $\G$, }
\end{cases}
\]
we immediately see that $u\in H^1(\G)$, solves \eqref{NLS} on each edge of $\G$, satisfies the Kirchhoff conditions at every vertex of $\K$ and by construction is supported on $\mathscr{C}\subset \K$. Hence, letting
$  \mu=\int_{0}^{kL}|\phi|^2\dx$,
the proof is complete.
\end{proof}

\begin{remark}
The previous theorem applies also to the everywhere nonlinear problem, with the same proof.
\end{remark}

\begin{remark}
Theorem \ref{theocyc} goes beyond a mere breach of Theorem \ref{nonex} for graphs with cycles. It shows that the presence of a cycle (with some nice metric properties) immediately generates bound states supported on $\K$ (more precisely, on $\mathscr{C}$) \emph{for any value of} $\lambda\in\R$.
\end{remark}

\begin{remark}
 If $\G$ is a tadpole graph, such as that considered in \cite{CFN} and \cite{NPS}, then Theorem \ref{theocyc} holds without any restriction on the length of the cycle.
\end{remark}

\medskip
Exploiting the same technique, one can also exhibit trees with two or more pendants that admit bound states supported on $\K$ for any value of $\lambda\in\R$. This, in particular, shows that the assumption on the pendants in Theorem \ref{nonex} is necessary too.

In the next two examples we assume that $p>2$, $\lambda\in\R$ and $L>0$ are fixed and that $\phi$ is the odd $L$--periodic function provided by Lemma \ref{int}. Let $\overline{x}$ be a point in $(0,L)$ such that $\phi'(\overline{x})=\phi'(-\overline{x})=0$.

\begin{example}
The simplest noncompact tree having more than one pendant is the graph $\G$ of Figure \ref{twopen}. Assume (for simplicity) that the length of each pendant is $\overline{x}$, so that
 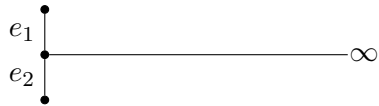
\begin{figure}[ht]
 \begin{center}
 \begin{tikzpicture}[xscale= 0.6,yscale=0.6]

 \node at (0,0) [nodo] (00) {};
 \node at (7,0) [infinito] (70) {$\infty$};
 \node at (0,1) [nodo] (01) {};
 \node at (0,-1) [nodo] (0-1) {};
 \node at (-.5,.5) [infinito] () {$e_1$};
 \node at (-.5,-.5) [infinito] () {$e_2$};

 \draw [-] (00) -- (70);
 \draw [-] (00) -- (01);
 \draw [-] (00) -- (0-1);
 \end{tikzpicture}
 \end{center}
 \caption{\footnotesize{a tree consisting of a half-line and two pendants.}}
 \label{twopen}
 \end{figure}
we can identify $e_1\cup e_2$ with the interval $[-\overline{x},\overline{x}]$. Hence, defining
\[
u(x)=\begin{cases}
\phi(x) & \text{if } x\in e_1\cup e_2\\
0 & \text{otherwise on $\G$, }
\end{cases}
\]
one sees that $u$ is a bound state of mass $\mu=\int_{-\overline{x}}^{\overline{x}}|\phi|^2\dx$, with Lagrange multiplier $\lambda$, supported on $\K= e_1\cup e_2$.
\end{example}

\begin{example}
 Let $\G$ be the graph with three pendants of Figure \ref{final}, with $\ell_{e_1}=\ell_{e_5}=\overline{x}$ and $\ell_{e_2}=\ell_{e_3}=\ell_{e_4}=L$. The path $e_1$--$e_5$ can be identified with the interval $[-\overline{x},3L+\overline{x}]$.

 \begin{figure}[ht]
 \begin{center}
 \begin{tikzpicture}[xscale= 0.6,yscale=0.6]

 \node at (-1,-1) [nodo] (-1-1) {};
 \node at (0,0) [nodo] (00) {};
 \node at (2,0) [nodo] (20) {};
 \node at (4,0) [nodo] (40) {};
 \node at (6,0) [nodo] (60) {};
 \node at (7,1) [nodo] (71) {};
 \node at (-1,5) [infinito] (-15) {$\infty$};
 \node at (1,-5) [infinito] (1-5) {$\infty$};
 \node at (3,5) [infinito] (35) {$\infty$};
 \node at (8,-5) [infinito] (8-5) {$\infty$};
 \node at (4,-2) [nodo] (4-2) {};
 \node at (-.8,-.2) [infinito] () {$e_1$};
 \node at (6.8,.2) [infinito] () {$e_5$};
 \node at (1,.3) [infinito] () {$e_2$};
 \node at (3,.3) [infinito] () {$e_3$};
 \node at (5,.3) [infinito] () {$e_4$};

 \draw [-] (-1-1) -- (00);
 \draw [-] (20) -- (00);
 \draw [-] (20) -- (40);
 \draw [-] (60) -- (40);
 \draw [-] (60) -- (71);
 \draw [-] (00) -- (-15);
 \draw [-] (00) -- (1-5);
 \draw [-] (20) -- (35);
 \draw [-] (40) -- (4-2);
 \draw [-] (60) -- (8-5);

 \end{tikzpicture}
 \end{center}
 \caption{\footnotesize{a tree with 4 half-lines and 3 pendants.}}
 \label{final}
 \end{figure}
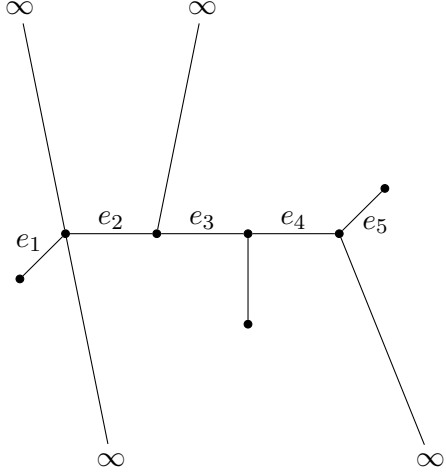
 
With the same function $\phi$ as above, defining
\[
u(x)=\begin{cases}
\phi(x) & \text{if } x\in \bigcup_{i=1}^5e_i\\
0 & \text{otherwise on $\G$, }
\end{cases}
\]
one sees that $u$ is a bound state with Lagrange multiplier $\lambda$, supported on $\bigcup_{i=1}^5e_i\subset \K$.
\end{example}

\section{Comparison with the everywhere nonlinear problem}
\label{full_non}

It is interesting, finally, to compare the phenomenon of nonexistence of bound states with small mass described in Theorem \ref{non_lapos}, to the 
case where the nonlinearity is present on the whole graph. Essentially, Theorem \ref{non_lapos} says that when $p\in[4,6)$ and $\mu$ is small there are neither ground states nor bound states supported on $\G$. In other words {\em all} solutions supported on $\G$ are ruled out for small values of the mass.

This feature is specific of problems with localized nonlinearity, since it has no analogue when the nonlinearity is present on the whole graph, as we now show. In particular, we  show that there are graphs such that \emph{for every value} of $\mu$ the everywhere nonlinear problem has {\em no} ground states, but {\em does admit} bound states supported on the whole graph.

We recall that a function $u$ is a bound state of mass $\mu$ for the everywhere nonlinear problem if and only if it satisfies Definition \ref{defbs} with \eqref{NLS} replaced by
\begin{equation}
 \label{NLSfull}
 u_e'' + |u_e|^{p-2}\,u_e = \lambda u_e.
\end{equation}

The associated NLS energy functional is
\[
\mathcal{E}(u,\G) = \frac12\int_\G |u'|^2\dx - \frac1p\int_\G |u|^p\dx,
\]
where we have made explicit the dependence on $\G$ for future use, 
and a ground state of mass $\mu$ is a minimizer of $\mathcal{E}(\,\cdot\,,\G)$ in $H_{\mu}^1(\G)$.

As an example of graph to illustrate the above discussion we consider the \emph{double bridge} graph $\B$ of Figure \ref{doubri}. 

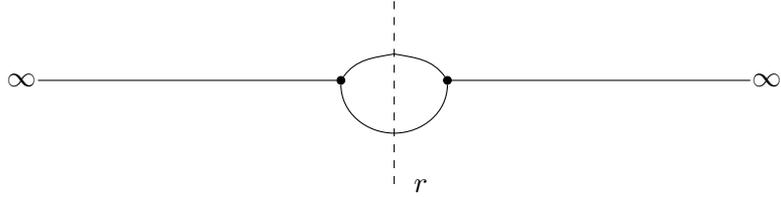
\begin{figure}[ht]
\begin{center}
\begin{tikzpicture}[xscale= 0.7,yscale=0.7]

\node at (-1,6) [nodo] (-16) {};
\node at (1,6) [nodo] (16) {};
\node at (-7,6) [infinito] (-76) {$\infty$};
\node at (7,6) [infinito] (76) {$\infty$};
\node at (.5,4) [infinito] () {$r$};

\draw [-] (-16) to [out=55,in=190] (0,6.5);
\draw [-] (0,6.5) to [out=-10,in=125] (16);
\draw [-] (-16) to [out=-90,in=-180] (0,5);
\draw [-] (0,5) to [out=0,in=-90] (16);
\draw [-] (16) -- (76);
\draw [-] (-16) -- (-76);
\draw [dashed] (0,7.5) -- (0,4);
\end{tikzpicture}
\end{center}
\caption{\footnotesize{a \emph{double bridge} graph, symmetric with respect to the axis $r$.}}
\label{doubri}
\end{figure}

By Theorem 1.2 of \cite{ast0} or Theorem 2.5 of \cite{ast1}, the graph $\B$ does not support ground states, for any value of $p\in(2,6)$ and $\mu>0$. However, the situation changes drastically when {\em bound} states are concerned.

\begin{proposition}
 \label{doubries}
 Let $p\in(2,6)$. For every $\mu>0$, there exists a bound state $u$ of mass $\mu$, symmetric with respect to the axis $r$ and everywhere positive on $\B$.
\end{proposition}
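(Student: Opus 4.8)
The plan is to construct the bound state explicitly, solving the ODE \eqref{NLSfull} edge by edge through a phase-plane analysis and exploiting the reflection symmetry to cut down the number of matching conditions. Throughout I take $\lambda>0$, which is forced: as in the discussion after Theorem \ref{non_lapos}, a positive $L^2$ solution of $u''+|u|^{p-2}u=\lambda u$ on $[0,+\infty)$ can exist only for $\lambda>0$, and it is then (a translate of) the decreasing soliton tail, uniquely determined by its value $a\in(0,u_{\max})$ at the vertex, where $u_{\max}=(p\lambda/2)^{1/(p-2)}$ and its inward derivative equals $-a\sqrt{\lambda-\tfrac2p a^{p-2}}<0$. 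On the two finite arcs $A_1,A_2$ joining the central vertices $V_1,V_2$ I would use the bounded orbits of the same equation: in the $(u,u')$ phase plane $u=0$ is a saddle and $u_*=\lambda^{1/(p-2)}$ a center, and every periodic orbit encircling $u_*$ stays strictly positive, since it lies inside the homoclinic loop and the latter meets $\{u=0\}$ only at the saddle. Positivity of the final function is thus automatic.

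First I would fix the ansatz dictated by the symmetry. Reflection across $r$ interchanges the two half-lines and the two vertices and maps each arc onto itself reversing its orientation, its fixed points being the midpoints $m_1,m_2$ of $A_1,A_2$. A symmetric function therefore carries equal soliton-tail profiles on the two half-lines, takes a common value $a:=u(V_1)=u(V_2)$ at the vertices, and is even about each $m_i$, so that $u'(m_i)=0$ (a Neumann condition). I parametrize the profile on $A_i$ by its midpoint value $b_i$: it is the trajectory through $(b_i,0)$, even in the arc-length from $m_i$, and I require that after the half-length $L_i:=\ell_{A_i}/2$ it reaches the value $a$. This gives, for $i=1,2$, a matching relation $G(b_i,\lambda,L_i)=a$, where $G$ denotes the value attained along the orbit at distance $L_i$ from the turning point $b_i$.

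The only remaining equation is the Kirchhoff condition \eqref{kirchhoff} at $V_1$ (equivalently, by symmetry, at $V_2$). Writing $d_H=-a\sqrt{\lambda-\tfrac2p a^{p-2}}<0$ for the inward derivative of the half-line tail and $\beta_i>0$ for those of the arcs (positive because, to balance $d_H$, the profile must increase from $a$ toward a maximum at $m_i$, so $b_i>a$), the condition reads $d_H+\beta_1+\beta_2=0$. For each fixed $\lambda$ a standard shooting argument yields $b_i=b_i(a,\lambda)$ and $\beta_i=\beta_i(a,\lambda)$ from the relations $G(b_i,\lambda,L_i)=a$, so that Kirchhoff becomes a single scalar equation in $a$; solving it by the intermediate value theorem as $a$ ranges in $(0,u_{\max})$ singles out a one-parameter family of genuine, positive, symmetric bound states, parametrized by $\lambda>0$.

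It then remains to match the prescribed mass. Along this family $\mu(\lambda)=\|u_\lambda\|_{L^2(\B)}^2$ is continuous, and the claim will follow from the intermediate value theorem once I show $\mu(\lambda)\to0$ as $\lambda\to0^+$ and $\mu(\lambda)\to+\infty$ as $\lambda\to+\infty$. For small $\lambda$ the whole configuration has vanishing amplitude ($u_*,u_{\max}\to0$), so every contribution to the mass tends to $0$. For large $\lambda$ the arcs, of fixed length, contain many short periods (the linearized frequency at $u_*$ is $\sqrt{(p-2)\lambda}$), one can keep $a$ comparable to $u_*=\lambda^{1/(p-2)}$, and the half-line tails alone then contribute a mass of order $\lambda^{\frac{6-p}{2(p-2)}}$, which diverges precisely because $p<6$. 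The main obstacle is exactly this last step: establishing the global persistence of the family for all $\lambda>0$ (via the implicit function theorem together with a continuation argument) and controlling the two mass limits uniformly, in particular keeping $a$ bounded away from $0$ and $u_{\max}$ as $\lambda\to+\infty$. This is also where the subcriticality $p<6$ enters, mirroring the role of the scaling invariance of Remark \ref{scale}.
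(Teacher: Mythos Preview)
Your approach is genuinely different from the paper's and considerably more laborious. The paper does not construct the bound state by phase-plane analysis at all: instead it cuts $\B$ along the axis $r$, obtaining a half-graph $\widetilde\B$ consisting of one half-line and two pendants whose free endpoints $\vv_1,\vv_2$ lie on $r$. By the results of \cite{ast2}, $\widetilde\B$ admits a \emph{ground state} $\widetilde u$ of mass $\mu/2$ for every $\mu>0$; this $\widetilde u$ can be taken real and positive and, being a critical point, satisfies Kirchhoff at every vertex, which at the degree-one vertices $\vv_1,\vv_2$ simply reads $\widetilde u'(\vv_1)=\widetilde u'(\vv_2)=0$. Reflecting $\widetilde u$ across $r$ then yields a $C^1$ function on $\B$ that solves \eqref{NLSfull} on every edge and satisfies \eqref{kirchhoff} at both vertices of $\B$: a symmetric, positive bound state of mass $\mu$. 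No shooting, no continuation in $\lambda$, and no mass-limit analysis are needed; existence for \emph{every} $\mu$ is inherited directly from the cited variational result on $\widetilde\B$.

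Your sketch, by contrast, leaves precisely the hard parts open, as you yourself acknowledge: the global persistence of the family for all $\lambda>0$ and the uniform control keeping $a$ bounded away from $0$ and $u_{\max}$ as $\lambda\to\infty$. There is a further issue you pass over: the relation $G(b_i,\lambda,L_i)=a$ need not define $b_i$ uniquely, or at all, for a given $a$ and fixed half-length $L_i$, since the time-of-flight from the turning point $b_i$ to the level $a$ along a periodic orbit is bounded by a quarter-period that itself depends on $b_i$; for small $\lambda$ (long periods) or short arcs the matching may fail outright, while for large $\lambda$ multi-valuedness complicates the continuation argument. Closing these gaps would essentially require a bifurcation analysis in the spirit of \cite{NPS}, whereas the paper's cut-and-reflect trick bypasses all of it by delegating the existence question to a known ground-state theorem.
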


\begin{proof}
 Let $\widetilde{\B}$ be the graph on the left of the axis $r$, as in Figure \ref{Gtilde}.
 \begin{figure}[ht]
 \begin{center}
 \begin{tikzpicture}[xscale= 0.7,yscale=0.7]

 \node at (-1,6) [nodo] (-16) {};
 \node at (0,6.5) [nodo] (065) {};
 \node at (0,5) [nodo] (05) {};
 \node at (-7,6) [infinito] (-76) {$\infty$};
 \node at (.5,6.5) [infinito] (v1) {$\vv_1$};
 \node at (.5,5) [infinito] (v2) {$\vv_2$};

 \draw [-] (-16) to [out=55,in=190] (065);
 \draw [-] (-16) to [out=-90,in=-180] (05);
 \draw [-] (-16) -- (-76);
 \end{tikzpicture}
 \end{center}
 \caption{\footnotesize{the graph $\widetilde{\B}$.}}
 \label{Gtilde}
 \end{figure}
 
 By the results of \cite{ast2}, for every $\mu>0$ there exists $\widetilde{u}\in H_{\mu/2}^1(\widetilde{\B})$ such that
 \[
  \mathcal{E}(\widetilde{u},\widetilde{\B})=\inf_{v\in H_{\mu/2}^1(\widetilde{\B})}\mathcal{E}(v,\widetilde{\B}),
 \]
 that is, $\widetilde{u}$ is a ground state of mass $\mu/2$ on $\widetilde{\B}$ for the everywhere nonlinear problem. Notice that, since $\widetilde{u}$ is a minimizer of $\mathcal{E}$, up to multiplication by a constant phase, it can be taken real and everywhere positive (see again \cite{ast1}). Of course $\widetilde{u}$ solves \eqref{NLSfull} on the edges of $\widetilde{\B}$ and the Kirchhoff condition on the vertices. In particular, $\widetilde{u}'(\vv_1)=\widetilde{u}'(\vv_2)=0$.
 
 Now, consider the function $u:\B\to\R$, symmetric with respect to the axis $r$ and such that $u_{|\widetilde{\B}}=\widetilde{u}$. Obviously, $u\in H_{\mu}^1(\B)$, satisfies \eqref{kirchhoff} at the vertices of $\B$ and \eqref{NLSfull} on all the edges of $\B$, by the vanishing of $u'$ at $\vv_1$ and $\vv_2$. Thus $u$ is a bound state of mass $\mu$, symmetric on $\B$ and everywhere positive.
\end{proof}

Arguing as in the proof of Proposition \ref{doubries}, one can prove that an analogous result holds also for the \emph{triple bridge} graph.
\medskip

We conclude with a heuristic justification of this phenomenon. By Theorem  \ref{non_lapos}, there are no bound states supported on $\B$ (for the localized problem)
as soon as $ \ell \mu^{\frac{p-2}{6-p}}$ is small enough. If this were not the case, namely if we had a bound state for every $ \ell \mu^{\frac{p-2}{6-p}}$ small, we could construct a sequence $u_n$ of bound states of {\em fixed} mass each of them living on a double bridge graph $\B_n$, with $\ell_n \to 0$. Now, it is not difficult to show that this sequence of bound states tends
to a bound state on the limiting graph, which is $\R$ (the compact core, of length $\ell_n$, disappears in the limit). Thus we would have a nonzero $L^2$ solution of 
the {\em linear} problem $u'' = \lambda u$ on $\R$, which is impossible. This is where the presence of the localized nonlinearity is essential: if the compact core ``disappears'', we are left with a linear problem, that has no solution.

On the contrary, if we consider the everywhere nonlinear problem and we use the same argument, we end up in the limit with the problem $u'' + |u|^{p-1}u = \lambda u$ on $\R$, a problem  that {\em does have} solutions (the solitons). Thus, in this case there is no contradiction in the existence of bound states on all the $\B_n$'s.

\bigskip
\noindent\textbf{Acknowledgements}

\medskip
\noindent Lorenzo Tentarelli acknowledges the support of MIUR through the FIR grant 2013 ``Condensed Matter in Mathematical Physics (Cond-Math)'' (code RBFR 13WAET).

\end{document}